\newtheoremstyle{cited}%
  {3pt}
  {3pt}
  {\itshape}
  {}
  {\bfseries}
  {.}
  {.5em}
  {\thmname{#1} \thmnumber{#2} \thmnote{\normalfont#3}}
\newtheoremstyle{cited2}%
  {3pt}
  {3pt}
  {\upshape}
  {}
  {\bfseries}
  {.}
  {.5em}
  {\thmname{#1} \thmnumber{#2} \thmnote{\normalfont#3}}
\newtheorem{thm}{Theorem}[section]
\newtheorem{lemma}[thm]{Lemma}
\newtheorem{prop}[thm]{Proposition}
\newtheorem{corollary}[thm]{Corollary}
\theoremstyle{definition}
\newtheorem{axiom}[thm]{Axiom}
\newtheorem{eg}[thm]{Example}
\newtheorem{defn}[thm]{Definition}
\theoremstyle{remark}
\newtheorem{remark}[thm]{Remark}
\newtheorem{notation}[thm]{Notation}
\theoremstyle{cited}
\newtheorem{citedlemma}[thm]{Lemma}
\newtheorem{citedprop}[thm]{Proposition}
\theoremstyle{cited2}
\newtheorem{citeddefn}[thm]{Definition}
\theoremstyle{empty}
\newtheorem{duplicate}{Theorem}
\newcommand{\sop}{system of parameters}
\newcommand{\CM}{Cohen-Macaulay}
\newcommand{\fg}{finitely-generated}
\newcommand{\ph}{phantom}
\newcommand{\phex}{\ph\  extension}
\newcommand{\lcphex}{lim cl-\ph\ extension}
\newcommand{\dvr}{discrete valuation ring}
\newcommand{\cld}{complete local domain}
\newcommand{\charp}{characteristic $p>0$}
\newcommand{\nzd}{non-zerodivisor}
\newcommand{\axioma}{Functoriality Axiom}
\newcommand{\axiomb}{Semi-residuality Axiom}
\newcommand{\axiomc}{Faithfulness Axiom}
\newcommand{\scr}{\mathscr}
\newcommand{\symm}{\text{Sym}}
\newcommand{\symt}{\text{Sym}^2}
\newcommand{\im}{\text{im}}
\newcommand{\coker}{\text{coker}}
\newcommand{\Hom}{\text{Hom}}
\newcommand{\Ext}{\text{Ext}}
\newcommand{\epf}{\text{epf}}
\newcommand{\id}{\text{id}}
\newcommand{\cl}{\text{cl}}
\newcommand{\tr}{\text{tr}}
\newcommand{\syz}{\text{syz}}
\newcommand{\Z}{\mathbb{Z}}
\title{Closure Operations that Induce Big Cohen-Macaulay Algebras}
\author[rg]{Rebecca~R.G.\corref{cor1}}
\ead{rirg@umich.edu}
\address[rg]{Department of Mathematics, University of Michigan, 2074 East Hall, 530 Church Street, Ann Arbor, MI 48109}
\begin{document}

\begin{abstract}
We study closure operations over a local domain $R$ that satisfy a set of axioms introduced by Geoffrey Dietz. The existence of a closure operation satisfying the axioms (called a Dietz closure) is equivalent to the existence of a big \CM\ module for $R$. When $R$ is complete and has \charp, tight closure and plus closure satisfy the axioms.

We give an additional axiom (the Algebra Axiom), such that the existence of a Dietz closure satisfying this axiom is equivalent to the existence of a big \CM\ algebra. We prove that many closure operations satisfy the Algebra Axiom, whether or not they are Dietz closures. We discuss the smallest big \CM\ algebra closure on a given ring, and show that every Dietz closure satisfying the Algebra Axiom is contained in a big \CM\ algebra closure. This leads to proofs that in rings of \charp, every Dietz closure satisfying the Algebra Axiom is contained in tight closure, and there exist Dietz closures that do not satisfy the Algebra Axiom.
\end{abstract}

\begin{keyword}
\MSC[2010] 13D22 \sep \MSC[2010] 13C14 \sep \MSC[2010] 13A35
\end{keyword}

\maketitle

\section{Introduction}

The study of big Cohen-Macaulay algebras was originally motivated by the Direct Summand Conjecture \cite{directsummand}. The existence of big \CM\ algebras implies the Direct Summand Conjecture, and with it a number of equivalent conjectures central to commutative algebra, including the Monomidal Conjecture \cite{directsummand} and the Canonical Element Conjecture \cite{canonicalelement}.

The equal characteristic case of these results, as well as the existence of big \CM\ algebras, was proved using tight closure methods \cite{modulemods, faithfullyflat, bigcmalgapps, betterbigcmalgebras}. In mixed characteristic, the homological conjectures were proved in dimension at most 3 by Heitmann \cite{heitmann}, and the existence of big \CM\ algebras by Hochster \cite{dim3bigcmalgebras}, but via a method that could not be extended to higher dimensional rings. In 2016, Yves Andre proved the Direct Summand Conjecture for mixed characteristic rings of arbitrary dimension using perfectoid methods \cite{andredirectsummand}, and used this result to prove the existence of big \CM\ algebras as well.

In \cite{dietz}, Dietz gave a list of axioms for a closure operation such that for a  local domain $R$, the existence of a closure operation satisfying these properties (called a Dietz closure) is equivalent to the existence of a big \CM\ module. The closure operation can be used to show that when module modifications are applied to $R$, the image of 1 in the resulting module is not contained in the image of the maximal ideal of $R$. When $R$ is complete and has \charp, tight closure is a Dietz closure, as are plus closure and solid closure \cite{dietz}.

Dietz asked whether it was possible to give an additional axiom such that the existence of a Dietz closure satisfying this axiom is equivalent to the existence of a big \CM\ algebra. Due to results on the existence of weakly functorial big \CM\ algebras \cite{bigcmalgapps, heitmannma}, big \CM\ algebras are even more desirable than big \CM\ modules. Further, big \CM\ algebras are known to exist in every case where big \CM\ modules are known to exist. 

In this paper, we answer Dietz's question in the positive, by giving an Algebra Axiom, Axiom \ref{algebraaxiom}.  We prove:

\begin{duplicate}[Theorem~\ref{bigcmalgebras}, Corollary~\ref{bigcmalgebraaxiom}]
A local domain $R$ has a Dietz closure that satisfies the Algebra Axiom if and only if $R$ has a big \CM\ algebra.
\end{duplicate}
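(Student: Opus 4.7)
The plan is to prove the biconditional by imitating Dietz's strategy in \cite{dietz}, but replacing module modifications with \emph{algebra} modifications, and using the new Algebra Axiom precisely at the step where multiplicative structure enters the construction.

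For the easier direction (big \CM\ algebra $\Rightarrow$ Dietz closure satisfying the Algebra Axiom), let $B$ be a big \CM\ $R$-algebra and define a closure by declaring $x \in N_M^{\cl_B}$ whenever $1 \otimes x \in \im(B \otimes_R N \to B \otimes_R M)$. The Dietz axioms follow from the exactness properties of tensoring with $B$ and the fact that every \sop\ of $R$ is a regular sequence on $B$; this verification parallels Dietz's check that solid closure is a Dietz closure in \charp. The Algebra Axiom should then follow essentially because $B$ is itself a ring: any polynomial relation produced by an algebra modification can be solved inside $B$.

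For the harder direction (Dietz closure with Algebra Axiom $\Rightarrow$ big \CM\ algebra), the plan is to perform a transfinite construction of \emph{algebra} modifications over $R$ that parallels Hochster's construction of big \CM\ modules from module modifications. Start with $S_0 = R$. Given $S_\alpha$ with $1 \notin \mathfrak{m} S_\alpha$, fix a \sop\ $x_1,\dots,x_d$ for $R$; whenever there exists a bad relation $r x_{k+1} = r_1 x_1 + \cdots + r_k x_k$ with $r \notin (x_1,\dots,x_k)S_\alpha$, form $S_{\alpha+1}$ as the $S_\alpha$-algebra obtained by adjoining polynomial variables that witness $r$ as an $S_{\alpha+1}$-linear combination of $x_1,\dots,x_k$. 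Take direct limits at limit ordinals. The desired big \CM\ algebra is then the union $S=\bigcup_\alpha S_\alpha$, so everything reduces to checking that $1 \notin \mathfrak{m} S_{\alpha+1}$ at each successor step.

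The main obstacle will be this last verification, which is exactly where the Algebra Axiom must do its work. In the module setting, Dietz translates a hypothetical equation $1 \in \mathfrak{m} M_{\alpha+1}$ into a closure computation inside a finitely presented module modification of $M_\alpha$ and invokes the Dietz axioms (in particular colon-capturing applied to the bad relation). In the algebra setting, the analogous pull-back of $1 \in \mathfrak{m} S_{\alpha+1}$ is a polynomial identity over $S_\alpha$ rather than a linear one, so the computation involves symmetric powers of the module modification. The Algebra Axiom must be formulated so that closure in a symmetric-power (or monomial) context reduces to closure in the base module modification; once it does, Dietz's module-modification argument applies mutatis mutandis, and the induction closes. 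The rest of the argument, including limit steps and the verification that a system of parameters of $R$ becomes regular on $S$, then proceeds as in \cite{dietz}.
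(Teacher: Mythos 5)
The overall architecture of your proposal matches the statement, but in both directions the step where the Algebra Axiom actually does work is left unproved, and in the forward direction your description of what the axiom accomplishes is not what the axiom says. The Algebra Axiom is a statement about symmetric squares: if $R \to M$, $1 \mapsto e_1$, is cl-phantom, then $R \to \symt(M)$, $1 \mapsto e_1 \otimes e_1$, is cl-phantom. For the direction ``big \CM\ algebra $\Rightarrow$ $\cl_B$ satisfies the axiom,'' the verification is not about ``solving polynomial relations produced by algebra modifications inside $B$.'' One needs the characterization that for a torsion-free algebra $B$ over a domain, $\alpha$ is $\cl_B$-phantom if and only if $\id_B \otimes \alpha$ splits (which in turn requires showing $B \to B \otimes_R M$ remains injective), and then one multiplies the splitting $\gamma$ with itself via $\lambda(b \otimes (m \otimes n)) = b\,\gamma(m)\gamma(n)$. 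Without that characterization, the axiom does not obviously follow from $B$ merely being a ring.

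For the converse, the real content is exactly the step you defer to the axiom. The paper does not run a transfinite induction on algebra modifications checking $1 \notin \mathfrak{m}S_{\alpha+1}$ directly; it alternates Dietz's module-modification construction with the operation $M \mapsto \symm(M)/(1-e)\symm(M)$. The point of the Algebra Axiom is that iterating it shows $R \to \symm^{\le 2^k}(M)/(1-e)\symm^{\le 2^k-1}(M)$ is cl-phantom for every $k$; since $\symm(M)/(1-e)\symm(M)$ is the direct limit of these, every finitely generated module through which $R$ maps at any stage is a cl-\phex, and the lemma ``cl-phantom implies $\alpha(1) \notin \mathfrak{m}M$'' closes the induction. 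Your route through algebra modifications can be made to work --- the paper later shows partial algebra modifications preserve phantomness --- but that itself requires (i) a separate matrix computation for the degree-$\le 1$ part of the modification using Generalized Colon-Capturing, and (ii) the symmetric-power/direct-limit argument just described for the higher-degree part. Writing that ``the Algebra Axiom must be formulated so that closure in a symmetric-power context reduces to closure in the base module modification'' states the desired conclusion rather than deriving it from the axiom as given, so the central implication is still missing.
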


In Section \ref{algebraaxiomsection}, we prove that many closure operations satisfy this axiom, including tight closure, which is also a Dietz closure, and torsion-free algebra closures, which are not in general Dietz closures. To prove this, we find alternative characterizations of cl-\phex s, maps that resemble splittings with respect to a closure operation cl, for the closures cl that we discuss. We also show that the big \CM\ algebras that we construct using the Algebra Axiom give the same closure operation as those constructed using algebra modifications as in \cite{bigcmalgapps}, and that this closure operation is the smallest big \CM\ algebra closure on any ring $R$.

In Section \ref{dietzplusalgincharp}, we use these results to compare Dietz closures satisfying the Algebra Axiom to other closure operations. We prove that each such closure is contained in a big \CM\ algebra closure. In \charp, we also show that all such closures are contained in tight closure. These results may lead to further characterizations of the singularities of a ring in terms of these closure operations. We also give an example that shows that not all Dietz closures satisfy the Algebra Axiom.

Since Dietz closures satisfying the Algebra Axiom, in particular big \CM\ algebra closures, share many of the key properties of tight closure, they may be a useful alternative to tight closure. This is particularly true for the mixed characteristic case, where there is no version of tight closure yet. In particular, in Definition \ref{testidealdef}, we define the test ideal of an arbitrary closure operation. This extends the definition of the tight closure test ideal that has been the subject of much study (see \cite{testidealssurvey} for a survey).

\section{Background}
\label{background}

All rings will be commutative Noetherian rings with unity, unless otherwise specified.

\begin{defn}
A \textit{closure operation} cl on a ring $R$ is a map $N \to N_M^\cl$ of submodules $N$ of \fg\ $R$-modules $M$ such that if $N \subseteq N' \subseteq M$ are \fg\ $R$-modules,
\begin{enumerate}
\item (Extension) $N \subseteq N_M^\cl$,
\item (Idempotence) $(N_M^\cl)_M^\cl=N_M^\cl$, and
\item (Order-Preservation) $N_M^\cl \subseteq (N')_M^\cl$.
\end{enumerate}
\end{defn}

\begin{defn}
\label{moduleclosure}
Suppose that $S$ is an $R$-module (resp. $R$-algebra). We can define a closure operation $\cl_S$ on $R$ by
\[u \in N_M^{\cl_S} \text{ if for all } s \in S, s \otimes u \in \im(S \otimes N \to S \otimes M),\]
where $N \subseteq M$ are \fg\ $R$-modules and $u \in M$. This is called a \textit{module (resp. algebra) closure}.
\end{defn}

\begin{remark}
Note that if $S$ is an $R$-algebra, $u \in N_M^{\cl_S}$ if and only if \[1 \otimes u \in \im(S \otimes N \to S \otimes M).\]
\end{remark}

\begin{citeddefn}[{\cite[Lemma~3.4]{dietzclosureprops}}]
\label{algebrafamilyclosure}
If $\cal{S}$ is a directed family of $R$-algebras, then we can define a closure operation $\cl_{\cal{S}}$ by $u \in N_M^{\cl_{\cal{S}}}$ if for some $S \in \cal{S}$, $u \in N_M^{\cl_S}$.
\end{citeddefn}

When $\cal{S}$ is any family of $R$-modules, it still generates a closure operation:

\begin{defn}
\label{familygeneratedclosure}
Let $\cal{S}$ be a family of $R$-modules. For $N \subseteq M$ \fg\ $R$-modules, we define $\cl_{\cal{S}}$ as follows:
\begin{enumerate}
\item Let $N_M^{\cl_1}$ be the submodule of $M$ generated by the elements $u \in M$ such that $u \in N_M^{\cl_S}$ for some $S \in \cal{S}$.
\item Let $N_M^{\cl_{k+1}}=(N_M^{\cl_k})_M^{\cl_1}$.
\item Since $R$ is Noetherian, this will eventually stabilize. Set $N_M^{\cl_{\cal{S}}}$ equal to the stable value of this chain.
\end{enumerate}
\end{defn}

\begin{citeddefn}[{\cite[Definition~2.2]{dietz}}]
\label{phantomDef}
Let $R$ be a ring with a closure operation cl, $M$ a \fg\ $R$-module, and $\alpha:R \to M$ an injective map with cokernel $Q$. We have a short exact sequence 
\[\begin{CD}
0 @>>> R @>{\alpha}>> M @>>> Q @>>> {0.}
\end{CD}\]
Let $\epsilon \in \Ext_R^1(Q,R)$ be the element corresponding to this short exact sequence via the Yoneda correspondence. We say that $\alpha$ is a cl-\phex\ if $\epsilon \in 0^\cl_{\Ext_R^1(Q,R)}$. Equivalently, if $P_\bullet$ is a projective resolution of $Q$ and $\vee$ denotes $\Hom_R(-,R)$, a cocycle representing $\epsilon$ is in $P_1^\vee$ is in $\im(P_0^\vee \to P_1^\vee)_{P_1^\vee}^\cl$.
\end{citeddefn}

\begin{remark}
This definition is independent of the choice of $P_\bullet$ \cite[Discussion~2.3]{dietz}, as is clear from the version of the definition involving Ext.

A split map $\alpha:R \to M$ is cl-\ph\ for any closure operation cl: in this case, the cocycle representing $\epsilon$ is in $\im(P_0^\vee \to P_1^\vee)$. We can view cl-\phex s as maps that are ``almost split" with respect to a particular closure operation.
\end{remark}

\begin{notation}
\label{notation1}
We use some notation from \cite{dietz}. Let $R$ be a ring, $M$ a finitely generated $R$-module, and $\alpha:R \to M$ an injective map with cokernel $Q$. Let $e_1=\alpha(1),e_2, \ldots,e_n$ be generators of $M$ such that the images of $e_2,\ldots,e_n$ in $Q$ form a generating set for $Q$. We have a free presentation for $Q$,
\[
\begin{CD}
{R^m} @>{\nu}>> {R^{n-1}} @>{\mu}>> Q @>>> {0,}
\end{CD}
\]
where $\mu$ sends the generators of $R^{n-1}$ to $e_2,\ldots,e_n$ and $\nu$ has matrix $(b_{ij})_{2 \le i \le n, 1 \le j \le m}$ with respect to some basis for $R^m$. We have a corresponding presentation for $M$, 
\[
\begin{CD}
{R^m} @>{\nu_1}>> {R^n} @>{\mu_1}>> {M,}
\end{CD}
\]
where $\mu_1$ sends the generators of $R^n$ to $e_1,\ldots,e_n$. Using the same basis for $R^m$ as above, $\nu_1$ has matrix $(b_{ij})_{1 \le i \le n, 1 \le j \le m}$ where $b_{1j}e_1+b_{2j}e_2+\ldots+b_{nj}e_n=0$ in $M$ \cite[Discussion~2.4]{dietz}. The top row of $\nu_1$ gives a matrix representation of the map $\phi:R^m \to R$ in the following diagram:

\[
\begin{CD}
0 @>>> R @>{\alpha}>> M @>>> Q @>>> 0 \\
@. @A{\phi}AA @A{\psi}AA @A{\id_Q}AA @AAA \\
 @. {R^m} @>{\nu}>> {R^{n-1}} @>{\mu}>> Q @>>> 0 \\
\end{CD}
\]
\end{notation}

In \cite[Discussion 2.4]{dietz}, Dietz gives an equivalent definition of a \phex\ using the free presentations $M$ and $Q$ given above. While he assumes that $R$ is a \cld\ and that cl satisfies 2 additional properties, these are not needed for all of the results. We restate some of his results in greater generality below.

\begin{citedlemma}[{\cite[Lemma~2.10]{dietz}}]
\label{lemma2.10}
Let $R$ be a ring possessing a closure operation $\cl$. Let $M$ be a finitely generated module, and let $\alpha:R \to M$ be an injective map. Let notation be as above. Then $\alpha$ is a cl-\phex\ of $R$ if and only if the vector $(b_{11},\ldots,b_{1m})^\tr$ is in $B_{R^m}^\cl$, where $B$ is the $R$-span in $R^m$ of the vectors $(b_{i1},\ldots,b_{im})^\tr$ for $2 \le i \le n$.
\end{citedlemma}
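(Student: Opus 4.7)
The plan is to invoke the second characterization in Definition~\ref{phantomDef}: $\alpha$ is a cl-\phex\ if and only if a cocycle representing $\epsilon$ in $P_1^\vee$ lies in $\im(P_0^\vee \to P_1^\vee)_{P_1^\vee}^\cl$ for some projective resolution $P_\bullet$ of $Q$. The strategy is to take $P_0 = R^{n-1}$ and $P_1 = R^m$ from the presentation of $Q$ already given, and then identify both the relevant cocycle and the submodule $\im(P_0^\vee \to P_1^\vee)$ concretely inside $R^m$.

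Next I would construct a comparison chain map lifting $\id_Q$ from this resolution to $0 \to R \xrightarrow{\alpha} M \to Q \to 0$. Define $\psi: R^{n-1} \to M$ by sending the $i$-th standard basis vector to $e_{i+1}$; this lifts $\id_Q$ by the defining property of $\mu$. To find $\phi: R^m \to R$ making the left square commute, I would compute $\psi \circ \nu$ on the $j$-th basis vector of $R^m$ to obtain $\sum_{i=2}^n b_{ij} e_i$, which, using the defining relation $\sum_{i=1}^n b_{ij} e_i = 0$ in $M$ coming from the $j$-th column of $\nu_1$, equals $-b_{1j} e_1 = \alpha(-b_{1j})$. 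Injectivity of $\alpha$ then pins $\phi$ down: under the identification $\Hom_R(R^m,R) \cong R^m$, it corresponds to the column vector $(-b_{11},\ldots,-b_{1m})^\tr$.

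Finally, under the identification $\Hom_R(R^{n-1},R) \cong R^{n-1}$, the map $\nu^\vee$ is given by the transpose of the matrix of $\nu$, so $\im(\nu^\vee)$ is the $R$-span in $R^m$ of the columns of this transpose, namely the vectors $(b_{i1},\ldots,b_{im})^\tr$ for $2 \le i \le n$. This is exactly $B$. Hence $\alpha$ is a cl-\phex\ if and only if $(-b_{11},\ldots,-b_{1m})^\tr \in B_{R^m}^\cl$, and since $B_{R^m}^\cl$ is a submodule of $R^m$ and so closed under negation, this is equivalent to $(b_{11},\ldots,b_{1m})^\tr \in B_{R^m}^\cl$, as claimed.

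The main obstacle is almost entirely bookkeeping: tracking indices between the matrices of $\nu$ and $\nu_1$, fixing a sign convention for the Yoneda cocycle, and taking care to invoke only the three basic axioms of a closure operation (in particular the submodule property, not idempotence or any auxiliary Dietz axiom). The only substantive step beyond unwinding definitions is the use of injectivity of $\alpha$ to pin down $\phi$ uniquely from $\alpha \phi = \psi \nu$.
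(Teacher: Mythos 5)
Your proposal is correct and is essentially the intended argument: this lemma is cited from Dietz without proof here, and the proof is exactly the unwinding you give, using the presentation $R^m \xrightarrow{\nu} R^{n-1} \to Q \to 0$ as the start of the projective resolution, lifting $\id_Q$ to identify the Yoneda cocycle $\phi$ with $\pm(b_{11},\ldots,b_{1m})^\tr$ via injectivity of $\alpha$, and identifying $\im(\nu^\vee)$ with $B$. Your handling of the sign (the paper's Notation \ref{notation1} declares $\phi$ to be the top row of $\nu_1$, while the relation $\sum_i b_{ij}e_i=0$ forces $\phi(f_j)=-b_{1j}$ with your choice of $\psi$) is the right observation: since $B_{R^m}^{\cl}$ is a submodule, the discrepancy is immaterial.
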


Below we give the definition of a Dietz closure.

\begin{defn}
\label{axioms}
Let $(R,m)$ be a fixed local domain and let $N, M,$ and $W$ be arbitrary finitely generated $R$-modules with $N \subseteq M$. A closure operation cl is called a \textit{Dietz closure} if the following conditions hold:
\begin{enumerate}
\item (Functoriality) Let $f:M \to W$ be a homomorphism. Then $f(N_M^\cl) \subseteq f(N)_W^\cl$.
\item (Semi-residuality) If $N_M^\cl=N$, then $0_{M/N}^\cl=0$.
\item (Faithfulness) The maximal ideal $m$ is closed in $R$.
\item (Generalized Colon-Capturing) Let $x_1,\ldots,x_{k+1}$ be a partial system of parameters for $R$, and let $J=(x_1,\ldots,x_k)$. Suppose that there exists a surjective homomorphism $f:M \to R/J$ and $v \in M$ such that $f(v)=x_{k+1}+J$. Then $(Rv)_M^\cl \cap \ker f \subseteq (Jv)_M^\cl$.
\end{enumerate}
\end{defn}

\begin{remark}
The axioms originally included the assumption that $0^\cl_R=0$, but this is implied by the other axioms \cite{newdietz}.

A closure operation on any ring $R$ can satisfy the \axioma, the \axiomb, or both. A closure operation on any local ring $R$ can satisfy the \axiomc.
\end{remark}

The proof of the next lemma requires $Q$ to have a minimal generating set, so we assume that $R$ is local for this generalization of \cite[Lemma~2.11]{dietz}:

\begin{lemma}
\label{lemma2.11}
Let $(R,m)$ be a local ring possessing a closure operation $\cl$ satisfying the \axioma, the \axiomb, and the \axiomc, and such that $0^\cl_R=0$. If $M$ is a finitely generated $R$-module such that $\alpha:R \to M$ is cl-phantom, then $\alpha(1) \not\in mM$.
\end{lemma}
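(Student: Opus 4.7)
The plan is to argue by contradiction using Lemma~\ref{lemma2.10}. Assume $\alpha(1) = e_1 \in mM$. Since $e_1, \ldots, e_n$ generate $M$, we may write $e_1 = \sum_{i=1}^n s_i e_i$ with each $s_i \in m$; because $R$ is local, $1 - s_1$ is a unit, so this rearranges to $e_1 = \sum_{i \geq 2} c_i e_i$ with every $c_i \in m$. The column vector $r = (1, -c_2, \ldots, -c_n)^{\tr}$ therefore lies in the kernel of the surjection $R^n \twoheadrightarrow M$, which is the image of $\nu_1$.

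Next I would modify the free presentation of $M$ by enlarging $R^m$ to $R^{m+1}$ and adjoining one new basis vector that maps to $r$ under the extended $\nu_1$. This still presents $M$ (the kernel is merely over-generated, which is harmless), and since the cl-phantom property depends only on $\alpha$ and not on the choice of presentation, Lemma~\ref{lemma2.10} applies verbatim: the vector $v_1 := (b_{11}, \ldots, b_{1m}, 1)^{\tr} \in R^{m+1}$ lies in $B^{\cl}_{R^{m+1}}$, where $B$ is the $R$-span of $v_i := (b_{i1}, \ldots, b_{im}, -c_i)^{\tr}$ for $2 \leq i \leq n$.

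To finish, I would consider the projection $\pi\colon R^{m+1} \to R$ onto the $(m+1)$-st coordinate. By construction $\pi(v_i) = -c_i \in m$ for each $i \geq 2$, so $\pi(B) \subseteq m$. The \axioma\ together with order-preservation yields $\pi(v_1) \in \pi(B)^{\cl}_R \subseteq m^{\cl}_R$, and the \axiomc\ gives $m^{\cl}_R = m$. Hence $\pi(v_1) \in m$, contradicting $\pi(v_1) = 1$ since $m$ is a proper ideal.

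The one mildly delicate point is justifying the presentation augmentation, but this is immediate from the remark following Definition~\ref{phantomDef}: cl-phantomness is encoded by an element of $\Ext^1_R(Q,R)$ and is therefore independent of the chosen resolution. I expect this to be the only place where care is needed; in particular, neither the \axiomb\ nor the hypothesis $0^{\cl}_R = 0$ is invoked in the argument, which uses only Functoriality (with order-preservation) and Faithfulness.
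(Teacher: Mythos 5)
Your argument is correct. The paper does not actually reprove this lemma (it is stated as a mild generalization of Dietz's Lemma 2.11 and the proof is deferred to that source), and your proof is in essence the same argument with one cosmetic difference in mechanics: Dietz, having observed that the relation $e_1-\sum_{i\ge 2}c_ie_i=0$ lies in $\im(\nu_1)$ and hence equals $\nu_1(z)$ for some $z\in R^m$, applies the \axioma\ to the functional $w\mapsto z\cdot w$ on the original $R^m$ (which sends the top row to $1$ and the remaining rows into $m$), whereas you adjoin the relation as an extra column of the presentation and project onto the new coordinate. The two devices are interchangeable, and you correctly identify and justify the only delicate point, namely that Lemma~\ref{lemma2.10} may be applied to the augmented presentation because cl-phantomness is encoded in $\Ext^1_R(Q,R)$ and Notation~\ref{notation1} permits an arbitrary free presentation of $Q$; one should just note explicitly that the augmented matrix still presents $Q$ (its extra column $(-c_2,\ldots,-c_n)^{\tr}$ lies in $\ker\mu$ since $e_1\mapsto 0$ in $Q$). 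Your use of locality (the unit $1-s_1$) to arrange $e_1=\sum_{i\ge2}c_ie_i$ with $c_i\in m$ is exactly where the local hypothesis enters, replacing the appeal to a minimal generating set of $Q$ mentioned in the paper, and your remark that only Functoriality, order-preservation, and Faithfulness are invoked is accurate.
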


We use a result on \ph\  extensions from \cite[Section~5]{hhpaper} that uses the notation of Notation \ref{notation1}.

\begin{lemma}\cite[Lemma~5.6a~and~c]{hhpaper}
\label{lemma5.6}
Let 
\[\begin{CD}
0 @>>> R @>{\alpha}>> M @>>> Q @>>> 0 \\
\end{CD}\]
be an exact sequence. Letting $P_\bullet$ be a projective resolution for $Q$, we get a commutative diagram 
\[\begin{CD}
0 @>>> R @>{\alpha}>> M @>>> Q @>>> 0 \\
@AAA @AA{\phi}A @AAA @AA{\id}A @. \\
P_2 @>>> P_1 @>{d}>> P_0 @>>> Q @>>> 0. \\
\end{CD}\]
By definition, $\alpha$ is cl-\ph\  if and only if $\phi \in \im(\Hom_R(P_0,R) \to \Hom_R(P_1,R))^\cl_{\Hom_R(P_1,R)}.$
\begin{enumerate}
\item For each $c \in R$, the image of $c\phi$ is a coboundary in $H^1(\Hom_R(P_\bullet,R))$ if and only if there is a map $\gamma:M \to R$ such that $\gamma \alpha=c(\id_R)$.
\item Let $S$ be an $R$-algebra, and $G_\bullet$ a projective resolution for $S \otimes_R Q$ that ends 
\[\ldots \to S \otimes P_1 \to S \otimes P_0 \to S \otimes Q \to 0.\]
The sequence
\[\begin{CD}
0 @>>> R @>{\alpha}>> M @>>> Q @>>> 0 \\
\end{CD}\]
remains exact upon tensoring with $S$ if and only if $\id_S \otimes_R \phi \in \Hom_S(S \otimes_R P_1,S)$ is a 1-cocycle in $\Hom_S(G_\bullet,S)$, in which case $\id_S \otimes_R \phi$ represents the extension over $S$ given by the sequence
\[\begin{CD}
0 @>>> S @>{\id_S \otimes \alpha}>> S \otimes_R M @>>> S \otimes_R Q @>>> 0. \\
\end{CD}\]
\end{enumerate}
\end{lemma}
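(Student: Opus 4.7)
The proof splits naturally into parts (1) and (2). For (1), I would identify $\phi$ with a cocycle representative of the class $\epsilon \in \Ext_R^1(Q,R)$ given by the SES under the Yoneda correspondence, so that the condition ``$c\phi$ is a coboundary'' translates into $c\epsilon = 0$ in $\Ext^1_R(Q,R)$. Scalar multiplication by $c$ on $\Ext^1_R(Q,R)$ corresponds on the extension side to pushing out the original sequence along $c \cdot \id_R : R \to R$, yielding a sequence $0 \to R \to M' \to Q \to 0$, so $c\phi$ is a coboundary iff this pushout splits. A splitting of the pushout is equivalent to the existence of a map $\gamma: M \to R$ with $\gamma\alpha = c \cdot \id_R$: given a splitting $\sigma: M' \to R$, composition with the natural map $M \to M'$ yields such a $\gamma$, and conversely the universal property of the pushout turns any such $\gamma$ into a splitting.

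For (2), I would apply $S \otimes_R -$ to the original SES and consider the long exact Tor sequence
\[\cdots \to \text{Tor}_1^R(Q,S) \xrightarrow{\partial} S \xrightarrow{\id_S \otimes \alpha} S \otimes_R M \to S \otimes_R Q \to 0.\]
Exactness of the tensored sequence is equivalent to $\partial = 0$. Since $\phi$ is a cocycle in $\Hom_R(P_\bullet,R)$, the composite $P_2 \to P_1 \xrightarrow{\phi} R$ vanishes, so $\id_S \otimes \phi$ descends to a map $H_1(S \otimes_R P_\bullet) = \text{Tor}_1^R(Q,S) \to S$; a standard diagram chase identifies this descended map with $\partial$. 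Hence the tensored sequence remains exact iff $\id_S \otimes \phi$ annihilates every cycle in $S \otimes_R P_1$. Because $G_\bullet$ is a projective resolution with $G_1 = S \otimes_R P_1$ and $G_0 = S \otimes_R P_0$, the cycles in $G_1$ coincide with $\im(G_2 \to G_1)$, and so annihilation on cycles is exactly the cocycle condition for $\id_S \otimes \phi$ in $\Hom_S(G_\bullet, S)$.

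The final assertion---that $\id_S \otimes \phi$ then represents the extension class of the tensored SES in $\Ext^1_S(S \otimes_R Q, S)$---is a naturality statement for the Yoneda correspondence, obtained by extending the comparison diagram between $G_\bullet$ and $0 \to S \to S \otimes_R M \to S \otimes_R Q \to 0$. The main technical subtlety, and what I expect to be the chief obstacle, is that $G_\bullet$ need not agree with $S \otimes_R P_\bullet$ in degrees $\ge 2$ (since $S \otimes_R P_\bullet$ may fail to be acyclic); the key point that makes everything work is that $G_\bullet$ being a resolution of $S \otimes_R Q$ forces $\im(G_2 \to G_1) = \ker(G_1 \to G_0)$, which is precisely the submodule of cycles that $\id_S \otimes \phi$ must annihilate.
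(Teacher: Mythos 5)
This lemma is quoted from Hochster--Huneke (\cite[Lemma~5.6]{hhpaper}) and the paper supplies no proof of its own, so there is nothing internal to compare against; judged on its merits, your argument is correct and complete. For (1), your route through the Yoneda correspondence --- $c\phi$ is a coboundary iff $c\epsilon=0$, $c\epsilon$ is the class of the pushout of the sequence along $c\cdot\id_R$, and a splitting $\sigma$ of the pushout corresponds exactly to a $\gamma:M\to R$ with $\gamma\alpha=c\cdot\id_R$ via the universal property --- is a clean, slightly more abstract packaging of the standard argument. The original (and the version this paper itself uses elsewhere, e.g.\ in the proof of Lemma~\ref{epflemma}) is more explicit: one writes $M\cong (R\oplus P_0)/\{(\phi(u),-d(u)):u\in P_1\}$ and builds $\gamma$ directly from a nullhomotopy $\beta:P_0\to R$ with $c\phi=\beta\circ d$; that hands-on form is what generalizes to the ``splitting mod $p^n$'' variants needed later, but for the statement at hand the two are equivalent. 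For (2), identifying the descent of $\id_S\otimes\phi$ to $H_1(S\otimes P_\bullet)=\text{Tor}_1^R(Q,S)$ with the connecting map $\partial$, and then converting ``$\partial=0$'' into ``$\id_S\otimes\phi$ kills $\ker(G_1\to G_0)=\im(G_2\to G_1)$'' using that $G_\bullet$ is exact at $G_1$, is exactly the right mechanism; you correctly flag that $G_2$ need not be $S\otimes P_2$, which is the one place a careless argument would go wrong. The final ``in which case'' clause follows, as you say, because the tensored comparison squares in degrees $0$ and $1$ still commute and the cocycle condition supplies the remaining square.
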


\section{An axiom for big \CM\ algebras}
\label{algebraaxiomsection}

In \cite{dietz}, Dietz asked whether it was possible to give a characterization of Dietz closures that induced big \CM\ algebras. Below, I answer this question positively.

There are many reasons to prefer big \CM\ algebras to big \CM\ modules; one is the ability to compare big \CM\ algebra closures on a family of rings. Suppose that we have the following commutative diagram:
\[
\begin{CD}
B @>>> C \\
@AAA @AAA \\
R @>>> S \\
\end{CD}
\]
with $R \to S$ a local map of local domains, $B$ an $R$-algebra, and $C$ an $S$-algebra. Then if $u \in N_M^{\cl_B}$, $1 \otimes u \in (S \otimes_R N)_{S \otimes_R M}^{\cl_C}.$ This property is a special case of \textit{persistence for change of rings}.

\begin{proof}[Proof of Persistence]
By assumption, $1 \otimes u \in \im(B \otimes_R N \to B \otimes_R M)$. We show that
\[1 \otimes (1 \otimes u) \in \im(C \otimes_S (S \otimes_R N) \to C \otimes_S (S \otimes_R M)).\]
We can identify $C \otimes_S (S \otimes_R N)$ with $C \otimes_R N$, and $C \otimes_S (S \otimes_R M)$ with $C \otimes_R M$. Under these identifications, $1 \otimes (1 \otimes u) \mapsto 1 \otimes u$. So our goal is to show that
\[1 \otimes u \in \im(C \otimes_R N \to C \otimes_R M).\]
There is some element $d \in B \otimes_R N$ that maps to $1 \otimes u$ in $B \otimes_R M$. Then $d \mapsto d'$, an element of $C \otimes_R N.$ Then by the commutativity of the diagram
\[
\begin{CD}
B \otimes_R N @>>> B \otimes_R M \\
@VVV @VVV \\
C \otimes_R N @>>> C \otimes_R M, \\
\end{CD}
\]
$d'$ is an element of $C \otimes_R N$ that maps to $1 \otimes u \in C \otimes_R M$. Thus $1 \otimes u \in (S \otimes_R N)_{S \otimes_R M}^{\cl_C}$.
\end{proof}

This implies that big \CM\ algebra closures are persistent in any case where we can build a commutative diagram as above, with $B$ and $C$ big \CM\ algebras. By a result of Hochster and Huneke \cite[Discussion and Definition 3.8, Theorem 3.9]{bigcmalgapps}, we have such a diagram when $R$ and $S$ are of equal characteristic and $R \to S$ is \textit{permissible}. A local homomorphism $R \to S$ is permissible if every minimal prime of $\hat{S}$ such that $\dim(\hat{S}/Q)=\dim(\hat{S})$ lies over a prime $P$ of $\hat{R}$ such that $P$ contains a minimal prime $P'$ of $\hat{R}$ such that $\dim(\hat{R}/P')=\dim(\hat{R})$.

Suppose that we add the following to the list of axioms for a Dietz closure cl:

\begin{axiom}[Algebra Axiom]
\label{algebraaxiom}
If $R \overset{\alpha}\to M,$ $1 \mapsto e_1$ is cl-phantom, then the map $R \overset{\alpha'}\to \symt(M)$, $1 \mapsto e_1 \otimes e_1$ is cl-phantom. 
\end{axiom}

\begin{remark}
\label{actualsymmetricpowers}
While the Algebra Axiom as stated uses a map to $\symt(M)$, the axiom is more easily understood using the isomorphism to $\symm^{\le 2}(M)/(1-e_1)\symm^{\le 1}(M)$.
This is the module consisting of all elements of $\symm(M)$ in $R$, $M$, and $\symt(M)$, with the following relations: for $r \in R$, $r \sim re_1 \sim re_1 \otimes e_1$, and for $m \in M$, $m \sim m \otimes e_1$. The map $\alpha'$ in the Algebra Axiom is cl-phantom if and only if the map $R \to \symm^{\le 2}(M)/(1-e_1)\symm^{\le 1}(M)$ sending $1 \mapsto e_1 \otimes e_1$ is cl-phantom, since these modules are isomorphic. To see that the modules are isomorphic, first notice that we can identify any element of $\symm^{\le 2}(M)/(1-e_1)\symm^{\le 1}(M)$ with an element of $\symt(M)$ by tensoring with copies of $e_1$. Next we show that
\[(1-e_1)\symm^{\le 1}(M) \bigcap \symt(M)=0.\] Given $m \in M$, $(1-e_1)m= m - e_1 \otimes m \in \symt(M)$. Since $m \in \symm^1(M)$ and $m \otimes e_1 \in \symt(M)$, the only way for them to be equal is to have $m=0$. This works similarly for $r \in R$. So we have the desired isomorphism.
This holds when we replace 2 by $2^k$ for any $k \ge 0$ as well.

The axiom will be used to show that when we take the direct limit of the $\symm^{2k}(M)$, the image of 1 stays out of the image of $m$. When we view this direct limit as a direct limit of the $\symm^{\le 2^k}(M)/(1-e_1)\symm^{\le 2^k-1}(M)$, we get 
\[\begin{aligned}
\varinjlim \symm^{\le 2^k}(M)/(1-e_1)\symm^{\le 2^k-1}(M)
&=\varinjlim \symm^{\le n}(M)/(1-e_1)\symm^{\le n-1}(M) \\
&=\symm(M)/(1-e_1)\symm(M).
\end{aligned}\]
\end{remark}

\begin{thm}
\label{bigcmalgebras}
If a local domain $R$ has a Dietz closure cl that satisfies the Algebra Axiom, then $R$ has a big \CM\ algebra.
\end{thm}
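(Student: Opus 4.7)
The plan is to construct a big \CM\ algebra $B$ for $R$ as a direct limit $B = \varinjlim B_n$ of $R$-algebras built by iterated algebra modifications, following the outline of Dietz's construction of a big \CM\ module from a Dietz closure but carrying along the algebra structure via symmetric powers (where the Algebra Axiom is precisely what enables the passage to symmetric powers). Set $B_0 = R$. Given $B_n$, perform an algebra modification $B_n \mapsto B_n[Y_1, \ldots, Y_k]/(u_0 - \sum x_i Y_i)$ for each ``bad relation'' $x_{k+1} u_0 = \sum_{i=1}^{k} x_i u_i$ in $B_n$, where $x_1, \ldots, x_{k+1}$ is a partial \sop\ of $R$ and $u_i \in B_n$. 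Enumerating all such bad relations across stages ensures that every partial \sop\ of $R$ is a regular sequence on $B$, so the entire content of the theorem reduces to proving $1 \notin m B_n$ for every $n$.

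The invariant I would maintain is that each $B_n$ is a directed union of finitely generated $R$-submodules $N$ each admitting a cl-\ph\ map $R \to N$ sending $1 \mapsto 1_{B_n}$. Under this invariant, Lemma \ref{lemma2.11} forces $1 \notin mN$ for each such $N$, hence $1 \notin m B_n$, hence $1 \notin m B$. Stage $n = 0$ is immediate. For the inductive step, given a bad relation in $B_n$ involving elements of some cl-\ph\ witness $N_\lambda \subseteq B_n$, I would first iterate the Algebra Axiom (as in Remark \ref{actualsymmetricpowers}) to obtain cl-\ph\ maps $R \to \symm^{2^j}(N_\lambda)$ for all $j$ and choose $j$ large enough to represent the relevant $u_i$ at degree $\le 2^j$. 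I would then perform the module-level modification from Dietz's proof: adjoin free formal-quotient variables $R Y_1 \oplus \cdots \oplus R Y_k$ to obtain $N_\lambda \oplus R^k$ and quotient by the column $(u_0, -x_1, \ldots, -x_k)^{\tr}$; the \axiomd, used exactly as in Dietz's argument for the module case, ensures that the resulting finitely generated module $N'$ still receives a cl-\ph\ map from $R$. Applying $\symm$ to $N'$ and quotienting by $1 - e_1$ (and by the inherited relations defining $B_n$) exhibits a fresh cl-\ph\ witness sitting inside $B_{n+1}$, preserving the invariant.

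The main obstacle will be the bookkeeping needed to interleave the symmetric-power passage with the module modifications compatibly: one must verify that the new witnesses $N'$ embed correctly into $B_{n+1}$, that higher-degree elements in $\symm^{2^j}$ can be handled by the module modification (or by iterating modifications at successively higher symmetric powers), and that the enumeration of bad relations over all partial \sops\ is exhaustive in the limit. Once this bookkeeping is carried out, Lemma \ref{lemma2.11} at each finite stage yields $1 \notin m B_n$, the direct limit gives $1 \notin m B$, and the construction makes every \sop\ of $R$ a regular sequence on $B$, so $B$ is the desired big \CM\ algebra.
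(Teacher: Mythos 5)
Your proposal is correct and follows essentially the same strategy as the paper: reduce to finitely generated stages, show each such stage is a cl-phantom extension of $R$ by combining Dietz's module-modification results with iterated applications of the Algebra Axiom to the finite symmetric powers $\symm^{2^j}$ (as in Remark \ref{actualsymmetricpowers}), and invoke Lemma \ref{lemma2.11} to conclude $1 \notin mM$ at every stage and hence in the limit. The only difference is organizational --- you run the outer construction as iterated algebra modifications rather than alternating a full big Cohen--Macaulay module construction with $\symm(B_1)/(1-e)\symm(B_1)$ --- and the bookkeeping you defer is exactly what Section \ref{partialalgmodspreservephantomness} of the paper carries out for partial algebra modifications.
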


\begin{remark}
\label{sympowersremark}
Note that if $S$ is an $R$-algebra, and we have an $R$-module $M$ and an $R$-module map $f:M \to S$, we can extend the map to a map from $\symt(M) \to S$ via $m \otimes n \mapsto f(m)f(n)$. If we also have a map $R \to M$, $1 \mapsto e$, then we can extend $f$ to a map from
\[\symm^{\le 2^k}(M)/(1-e)\symm^{\le 2^k-1}(M)\]
to $S$, since $f(e)$ is equal to the image of 1 in $S$ under the composition of maps $R \to M \to S$.
\end{remark}

\begin{proof}
We construct a big \CM\ module $B_1$ as in \cite{dietz} with a map $R \to B_1$, $1 \mapsto e$, and then take $\symm(B_1)/(1-e)\symm(B_1)$. We repeat these two steps infinitely many times, and take the direct limit $B$. This will be an $R$-algebra such that every system of parameters on $R$ is a (possibly improper) regular sequence on $B$. We need to show that $mB \ne B$.

At any intermediate stage $M$, after we have applied module modifications and taken symmetric powers, there is always a map $R \to M$ that factors through all previous intermediate modules. It suffices to show that $\im(1) \not\in mM$. By the arguments of \cite[Theorem~5.1]{dietzclosureprops}, \cite[Proposition 3.7]{bigcmalgapps}, and Remark \ref{sympowersremark}, if $\im(1) \in mB$ then there is some $M$ obtained from $R$ by a finite sequence of module modifications and finite symmetric powers as in Remark \ref{actualsymmetricpowers} for which $\im(1) \in mM$. However, $M$ is a cl-phantom extension of $R$ by \cite{dietz} and the Algebra Axiom. Thus Lemma \ref{lemma2.11} implies that $\im(1) \not\in mM$. Hence $\im(1) \not\in mB$, which implies that $B$ is a big \CM\ algebra for $R$.
\end{proof}

\subsection{A description of the Algebra Axiom in terms of a presentation of $\symt(M)$}

Let $\alpha:R \to M$ be an injective map sending $1 \mapsto e_1$. We use the notation of Notation \ref{notation1}. 
In particular, $Q=\coker(\alpha)$ and $B$ is the matrix $(b_{ij})_{1 \le i \le n, 1 \le j \le m}$ of the map $\nu_1$ with respect to the basis $e_1,\ldots,e_n$ of $R^n$ and the chosen basis of $R^m$. We have a map \[\alpha':R \to \symt(M),\] taking $1 \mapsto e_1 \otimes e_1$. Denote the cokernel by $Q'$. This is isomorphic to $\symt(M)/(R(e_1 \otimes e_1))$.

To get a presentation for $\symt(M)$, we start with the map $R^{m^2} \to R^{n^2}$ given by the matrix $B \otimes B$, and then add in the columns needed for the symmetry relations. There are $\frac{n^2-n}{2}$ of these columns, one for each pair $i < j$, with an entry equal to 1 in the row corresponding to $e_i \otimes e_j$, an entry equal to -1 in the row corresponding to $e_j \otimes e_i$, and 0's elsewhere. Call the corresponding map $\nu'_1$.

To get a presentation for $Q'$, we use this matrix with the top row removed. Call this matrix $\nu'$. We use this presentation to get the following diagram:

\[
\begin{CD}
0 @>>> R @>{\alpha'}>> {\symt(M)} @>>> {Q'} @>>> 0 \\
@AAA @A{\phi'}AA @A{\psi'}AA @A{\id}AA \\
{F_2} @>>> {R^{m^2+\frac{n^2-n}{2}}} @>{\nu'}>> {R^{n^2-1}} @>{\mu'}>> {Q'} @>>> 0
\end{CD}
\]

Let $\oplus$ denote horizontal concatenation of matrices, and $B_i$ the $i$th row of the matrix $B$. The map $\phi'$ is given by the row matrix $(B_1 \otimes B_1) \oplus 0^{\frac{n^2-n}{2}}$, i.e., \[(b_{11}B_1\ b_{12}B_1\ \ldots \ b_{1n}B_1\  0\  \ldots \ 0),\] which is the first row of $\nu_1'$.

The map $\alpha'$ is \ph\  if and only if $\im({\phi'}^{\tr}) \subseteq \im({\nu'}^{\tr})^\cl$. We can rewrite this statement as:

\[\begin{aligned}
&(B_1 \otimes B_1) \oplus 0^{\frac{n^2-n}{2}} \in \left(\sum_{i=2}^n \left((B_i \otimes B_i) \oplus 0^{\frac{n^2-n}{2}}\right)\right. \\
&\left. + \sum_{1 \le i < j \le n} \left((B_i \otimes B_j) \oplus f_{i,j}- (B_j \otimes B_i) \oplus f_{i,j}\right) \right)^\cl_{R^{m^2+\frac{n^2-n}{2}}},
\end{aligned}\]

\noindent where $f_{i,j}$ is the vector of length $\frac{n^2-n}{2}$ with an entry equal to 1 in the $\left(\sum_{\ell=1}^{i-1} (n-\ell)\right)+(j-i)$th 
spot and 0's elsewhere.

\subsection{Proofs that the Algebra Axiom holds for many closure operations}

\subsubsection{Tight Closure}

Let $R$ be a reduced ring of \charp, and let $*$ denote tight closure.

For tight closure, we prove the axiom using the following equivalent definition of a \phex:

\begin{citedprop}[{\cite[Proposition~5.8]{hhpaper}}]
Given a short exact sequence $0 \to R \overset{\alpha}\to M \to Q \to 0$, $\alpha$ is *-\ph\  if and only if there is some $c \in R^\circ$ such that for all sufficiently large $e$, there exist maps $\gamma_e:F^e(M) \to F^e(R)=R$ such that $\gamma_e \circ F^e(\alpha)=c \cdot \id_R$.
\end{citedprop}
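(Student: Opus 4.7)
The plan is to translate the hypothesis that $\alpha$ is *-phantom into a module-theoretic factorization statement at each Frobenius level, and then convert that factorization into the existence of the map $\gamma_e$ using a presentation of $F^e(M)$ built out of the chosen resolution $P_\bullet$ of $Q$. Throughout, I keep notation as in Notation~\ref{notation1} and use the diagram of Lemma~\ref{lemma5.6}, with $\phi \in P_1^\vee$ the cocycle representing $\epsilon$ and $\psi : P_0 \to M$ the lift of $\id_Q$.

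First, I unpack the *-phantom hypothesis. By Definition~\ref{phantomDef}, $\alpha$ is *-phantom exactly when $\phi \in \im(d^\vee : P_0^\vee \to P_1^\vee)^*_{P_1^\vee}$. Spelling out tight closure in the finitely generated free module $P_1^\vee$, this says: there exists $c \in R^\circ$ such that for all $e \gg 0$, $c\phi^{[p^e]} \in \im\bigl(F^e(P_0^\vee) \to F^e(P_1^\vee)\bigr)$. Because each $P_i$ is finitely generated free, the canonical map $F^e(P_i^\vee) \to \Hom_R(F^e(P_i), R)$ is an isomorphism, and $c\phi^{[p^e]}$ corresponds to $c \cdot F^e(\phi)$. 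Thus *-phantom is equivalent to: there exists $c \in R^\circ$ such that for $e \gg 0$ there is $\beta_e : F^e(P_0) \to R$ with $\beta_e \circ F^e(d) = c \cdot F^e(\phi)$.

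Second, I show that this factorization condition is equivalent to the existence of the desired $\gamma_e$. From the data of $(\alpha, \psi)$ one obtains a right-exact sequence $P_1 \xrightarrow{(-\phi,\, d)} R \oplus P_0 \xrightarrow{(\alpha,\, \psi)} M \to 0$: the right map is surjective because $\psi$ hits all of $Q$ modulo $\alpha(R)$, and a short chase shows that every relation $(r,p)$ comes from some $p' \in P_1$ by writing $p = d(p')$ and using injectivity of $\alpha$. Applying the right-exact functor $F^e$ yields $F^e(P_1) \to R \oplus F^e(P_0) \to F^e(M) \to 0$. Given $\beta_e$ as above, the $R$-linear map $R \oplus F^e(P_0) \to R$, $(r,q) \mapsto cr + \beta_e(q)$, vanishes on the image of $F^e(P_1)$ via the computation $-c\,F^e(\phi)(p) + \beta_e(F^e(d)(p)) = 0$, so it descends to $\gamma_e : F^e(M) \to R$; evaluating at $F^e(\alpha)(1)$ gives $c$, so $\gamma_e \circ F^e(\alpha) = c \cdot \id_R$. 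Conversely, given $\gamma_e$, set $\beta_e := \gamma_e \circ F^e(\psi)$. Then $\beta_e \circ F^e(d) = \gamma_e \circ F^e(\psi d) = \gamma_e \circ F^e(\alpha \phi) = c \cdot F^e(\phi)$, recovering the factorization. This step is a Frobenius-level analogue of Lemma~\ref{lemma5.6}(a), tailored so that no projective resolution of $F^e(Q)$ is required.

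The main obstacle I anticipate is precisely that $F^e(P_\bullet)$ is generally not a resolution of $F^e(Q)$, so one cannot simply quote Lemma~\ref{lemma5.6}(a) after applying Frobenius. Replacing the resolution of $F^e(Q)$ with the right-exact presentation of $M$ via $R \oplus P_0$ sidesteps this issue and makes both directions of the equivalence visible at once. The remaining technical point is the identification $F^e(P_i^\vee) \cong \Hom_R(F^e(P_i), R)$ for finitely generated free $P_i$, which is routine and justifies identifying $c\phi^{[p^e]}$ with $c \cdot F^e(\phi)$ under the tight closure formalism.
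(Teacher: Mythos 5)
The statement is only cited from \cite{hhpaper} and is given no proof in the present paper, so there is no in-house argument to compare against; your proof is correct and is the standard one. In particular, your passage through the presentation $P_1 \xrightarrow{(-\phi,\,d)} R \oplus P_0 \xrightarrow{(\alpha,\,\psi)} M \to 0$, which the right-exact functor $F^e$ preserves, is exactly the right maneuver: it converts the factorization $c\,F^e(\phi)=\beta_e\circ F^e(d)$ into the splitting map $\gamma_e$ and back without ever needing $F^e(P_\bullet)$ to resolve $F^e(Q)$ or $F^e(\alpha)$ to remain injective, which is where a naive Frobenius-level application of Lemma~\ref{lemma5.6}(a) would break down.
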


\begin{prop}
If an injective map $\alpha:R \to M$ sending $1 \mapsto u$ is *-phantom, then so is the map $\alpha':R \to \symt(M)$ sending $1 \mapsto u \otimes u$. As a result, * satisfies the Algebra Axiom.
\end{prop}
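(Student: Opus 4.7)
The plan is to use the characterization of *-phantom extensions from Proposition 5.8 as the one and only tool, and to produce for $\alpha'$ a near-retraction after Frobenius by applying $\symt$ to the near-retraction we already have for $\alpha$. Concretely, assume $\alpha$ is *-phantom and pull out from Proposition 5.8 a fixed $c \in R^\circ$ together with $R$-linear maps $\gamma_e : F^e(M) \to R$, defined for all $e \gg 0$, satisfying $\gamma_e \circ F^e(\alpha) = c \cdot \id_R$; equivalently, $\gamma_e(1 \otimes u) = c$.

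Next I would exploit the compatibility of symmetric powers with base change. Since $\symt$ commutes with base change along any ring map, and $F^e$ is base change along the Frobenius $R \to R$, there is a natural $R$-linear isomorphism
\[
F^e(\symt(M)) \;\cong\; \symt(F^e(M)),
\qquad 1 \otimes (x \otimes y) \;\longmapsto\; (1 \otimes x) \otimes (1 \otimes y).
\]
Apply the functor $\symt$ to $\gamma_e$ and compose with the canonical identification $\symt(R) \cong R$ sending $a \otimes b \mapsto ab$; this yields $\delta_e : F^e(\symt(M)) \to R$, given on simple tensors by $\delta_e(1 \otimes (x \otimes y)) = \gamma_e(1 \otimes x)\,\gamma_e(1 \otimes y)$. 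Since $F^e(\alpha')(1) = 1 \otimes (u \otimes u)$, the identification sends this to $(1 \otimes u) \otimes (1 \otimes u)$, and so
\[
\delta_e \circ F^e(\alpha') = \gamma_e(1 \otimes u)^2 \cdot \id_R = c^2 \cdot \id_R.
\]
Because $R$ is a domain and $c \in R^\circ$, we have $c^2 \in R^\circ$, so Proposition 5.8 applied to the uniform constant $c^2$ and the maps $\delta_e$ shows $\alpha'$ is *-phantom. Injectivity of $\alpha'$, which is part of the phantom extension definition, is automatic from this data: if $r(u \otimes u) = 0$ in $\symt(M)$ for some $r \in R$, then applying $\delta_e \circ F^e(-)$ gives $r^{p^e} c^2 = 0$, forcing $r = 0$.

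The main obstacle, and really the only non-formal step, is checking that the identification $F^e(\symt(M)) \cong \symt(F^e(M))$ transports $F^e(\alpha')(1)$ to $(1 \otimes u) \otimes (1 \otimes u)$ and is compatible with $\symt(\gamma_e)$; once that bookkeeping is in place, the computation $\delta_e \circ F^e(\alpha') = c^2 \cdot \id_R$ is immediate and the Algebra Axiom for tight closure falls out as a corollary.
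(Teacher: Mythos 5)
Your proposal is correct and follows essentially the same route as the paper: both use the Hochster--Huneke characterization of $*$-phantom via maps $\gamma_e$ with $\gamma_e \circ F^e(\alpha) = c\cdot \id_R$, identify $F^e(\symt(M))$ with $\symt(F^e(M))$, and define $\delta_e(x \otimes y) = \gamma_e(x)\gamma_e(y)$ to get the uniform constant $c^2 \in R^\circ$. Your extra check that $\alpha'$ is injective is a welcome addition the paper leaves implicit.
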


\begin{proof}
Since $R \overset{\alpha}\to M$ is *-phantom, we have maps $\gamma_e$ as described above. Notice that $F^e(\symt(M))$ is the symmetric tensor product $\symt(F^e(M))$. For any $e$ for which $\gamma_e$ exists, define a map 
\[\delta_e:F^e(\symt(M)) \to R\]
 by $\delta_e(m^q \otimes n^q)=\gamma_e(m^q)\gamma_e(n^q)$. (To see that this is well-defined, define it from the tensor product first, then notice that $\delta_e(m^q \otimes n^q)=\delta_e(n^q \otimes m^q)$.) Since $\delta_e(F^e(\alpha))(1)=\delta_e(e_1^q \otimes e_1^q)=c^2$, and $c$ does not depend on the choice of $e$, $R \overset{\alpha'}\to \symt(M)$ is *-phantom.
\end{proof}

\subsubsection{Algebra Closures}

Let $R$ be a ring and $\scr{A}$ a directed family of $R$-algebras, so that given $A,A' \in \scr{A}$, there is a $B \in \scr{A}$ that both $A$ and $A'$ map to. We can define a closure operation using $\scr{A}$ as in Definition \ref{algebrafamilyclosure}. Note that we do not need the elements of $\scr{A}$ to be \fg, and we do not assume that they are \fg\ in this section.

\begin{eg}
All algebra closures $\cl_A$ are closures of this type, with $\scr{A}=\{A\}$. In particular, if $R$ is a domain, plus closure is a closure of this type, with $\scr{A}=\{R^+\}$.
\end{eg}

\begin{eg}
If $R$ is a complete local domain and we let $\scr{A}$ be the set of solid algebras of $R$ (algebras $A$ such that $\Hom_R(A,R) \ne 0$), we get solid closure \cite{solidclosure}.
\end{eg}

To show that the axiom holds for algebra closures, we give an equivalent definition of cl-\ph\  for these closures that is easier to work with.

\begin{lemma}
\label{staysinjective}
Let $\alpha:R \hookrightarrow M$ be an $R$-module homomorphism. Let $A$ be an $R$-algebra, and $W$ the multiplicative set of \nzd s of $R$. If
\begin{enumerate}
\item all elements of $W$ are \nzd s on $A$, and
\item $W^{-1}A$ embeds in a free $W^{-1}R$-module,
\end{enumerate}
then $\id_A \otimes \alpha:A \to A \otimes_R M$ is injective. In particular, if $R$ is a domain and $A$ is a torsion-free algebra over $R$, then $A \hookrightarrow A \otimes_R M$.
\end{lemma}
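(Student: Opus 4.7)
The plan is to reduce the injectivity of $A \to A \otimes_R M$ to injectivity after inverting $W$, where we can exploit the embedding of $W^{-1}A$ into a free (hence flat) $W^{-1}R$-module.

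First I would set up the commutative square
\[
\begin{CD}
A @>{\id_A \otimes \alpha}>> A \otimes_R M \\
@VVV @VVV \\
W^{-1}A @>{\id_{W^{-1}A} \otimes W^{-1}\alpha}>> W^{-1}A \otimes_{W^{-1}R} W^{-1}M,
\end{CD}
\]
using the natural identification $W^{-1}(A \otimes_R M) \cong W^{-1}A \otimes_{W^{-1}R} W^{-1}M$. Condition (1) says that elements of $W$ act as non-zerodivisors on $A$, so the left vertical map $A \to W^{-1}A$ is injective. Thus it suffices to show the bottom horizontal map is injective, because then the composition $A \to W^{-1}A \to W^{-1}A \otimes_{W^{-1}R} W^{-1}M$ is injective, and this composition factors through the top map.

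Next I would use condition (2): let $j: W^{-1}A \hookrightarrow F$ be an embedding into a free $W^{-1}R$-module $F = \bigoplus_{i \in I} W^{-1}R$. Since $\alpha$ is injective and $W$ consists of non-zerodivisors on $R$, the localization $W^{-1}\alpha: W^{-1}R \hookrightarrow W^{-1}M$ is injective. Tensoring with the free module $F$, the map
\[
F \otimes_{W^{-1}R} W^{-1}R \longrightarrow F \otimes_{W^{-1}R} W^{-1}M
\]
is injective, since it is a direct sum of copies of the injection $W^{-1}R \hookrightarrow W^{-1}M$. Now consider the square
\[
\begin{CD}
W^{-1}A @>{\id \otimes W^{-1}\alpha}>> W^{-1}A \otimes_{W^{-1}R} W^{-1}M \\
@V{j}VV @V{j \otimes \id}VV \\
F @>>> F \otimes_{W^{-1}R} W^{-1}M.
\end{CD}
\]
A quick diagram chase finishes the argument: if $x \in W^{-1}A$ maps to $0$ in the upper right, then $j(x)$ maps to $0$ in the lower right; but the bottom map is injective, so $j(x) = 0$, and since $j$ is injective, $x = 0$.

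For the final sentence, if $R$ is a domain then $W = R \setminus \{0\}$ and $W^{-1}R = \Frac(R)$ is a field; condition (1) is exactly the torsion-freeness of $A$, and $W^{-1}A$ is automatically a $\Frac(R)$-vector space, hence free, so condition (2) holds as well. I do not anticipate a serious obstacle here; the only subtlety is ensuring that localization commutes with the relevant tensor products, which is standard given condition (1).
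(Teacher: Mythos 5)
Your proof is correct and follows essentially the same route as the paper's: localize at $W$, use injectivity of $A \to W^{-1}A$, embed $W^{-1}A$ into a free $W^{-1}R$-module, observe that tensoring the injection $W^{-1}R \hookrightarrow W^{-1}M$ with a free module preserves injectivity, and finish with the same diagram chase. The only cosmetic difference is that the paper first reduces to finitely generated submodules of $A$, a step your argument shows is unnecessary since arbitrary direct sums of injections are injective.
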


\begin{proof}
For all \fg\ $R$-submodules $A'$ of $A$, $W^{-1}A'$ embeds in a free $W^{-1}R$-module, $F$. The map $W^{-1}R \to W^{-1}M$ is injective, and this holds when we replace $W^{-1}R$ by $F$. Since $F \to F \otimes_R M$ is injective and $W^{-1}A'$ can be viewed as a submodule of $F$, $W^{-1}A' \to W^{-1}A' \otimes_R M$ is also injective--any element in the kernel would also be in the kernel of $F \to F \otimes M$. Since elements of $W$ are \nzd s on $A$, and hence on $A'$, this implies that $A' \to A' \otimes_R M$ is injective. Hence $A \to A \otimes_R M$ is injective.
\end{proof}

\begin{prop}
Suppose that every $A \in \scr{A}$ satisfies the hypotheses of Lemma \ref{staysinjective}. Then an injective map $\alpha:R \to M$ is cl-\ph\  if and only if for some $A \in \scr{A}$ there is a map $\gamma:A \otimes M \to A$ such that $\gamma \circ (\id_A \otimes \alpha)=\id_A$, i.e., if and only if $\id_A \otimes \alpha$ splits.
\end{prop}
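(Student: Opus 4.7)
The plan is to chain together Definition \ref{algebrafamilyclosure}, the definition of an algebra closure (Definition \ref{moduleclosure}), and the two halves of Lemma \ref{lemma5.6}. Using Notation \ref{notation1}, I fix a projective resolution $P_\bullet$ of $Q=\coker(\alpha)$ and let $\phi\in P_1^\vee$ denote a $1$-cocycle representing the Yoneda class of $0\to R\overset{\alpha}\to M\to Q\to 0$. Unwinding Definition \ref{algebrafamilyclosure} first, $\alpha$ is $\cl_{\scr A}$-\ph\ if and only if there exists some $A\in\scr A$ with $\phi\in\im(P_0^\vee\to P_1^\vee)^{\cl_A}_{P_1^\vee}$. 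Unwinding further, because $\cl_A$ is an algebra closure this is the same as requiring $1\otimes\phi$ to lie in the image of $A\otimes_R P_0^\vee\to A\otimes_R P_1^\vee$. Since each $P_i$ is finitely generated free, the natural identification $A\otimes_R P_i^\vee=\Hom_A(A\otimes_R P_i,A)$ then converts this into the statement that $\id_A\otimes\phi$ is a coboundary in $\Hom_A(G_\bullet,A)$, where $G_\bullet$ is any projective $A$-resolution of $A\otimes_R Q$ agreeing with $A\otimes_R P_\bullet$ in degrees $\le 1$.

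The hypothesis from Lemma \ref{staysinjective} is what allows $\id_A\otimes\phi$ to be interpreted as a cocycle representing a genuine extension over $A$. Specifically, Lemma \ref{staysinjective} gives that $\id_A\otimes\alpha:A\hookrightarrow A\otimes_R M$ is injective, so Lemma \ref{lemma5.6}(b) applies and tells us that $\id_A\otimes\phi$ is a $1$-cocycle in $\Hom_A(G_\bullet,A)$ representing the short exact sequence
\[0\to A\overset{\id_A\otimes\alpha}{\longrightarrow} A\otimes_R M\to A\otimes_R Q\to 0.\]
With that in hand, Lemma \ref{lemma5.6}(a) applied over $A$ with $c=1\in A$ says precisely that $\id_A\otimes\phi$ is a coboundary if and only if there is a map $\gamma:A\otimes_R M\to A$ with $\gamma\circ(\id_A\otimes\alpha)=\id_A$, i.e.\ $\id_A\otimes\alpha$ splits. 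Stringing the equivalences together yields the proposition.

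The only subtle point is logistical: one must be sure Lemma \ref{lemma5.6} is legitimately available over $A$ rather than over $R$, and this is exactly where the hypotheses of Lemma \ref{staysinjective} earn their keep by supplying the injectivity of $\id_A\otimes\alpha$. After that, both parts of Lemma \ref{lemma5.6} apply formally, and the rest of the argument is simply translation between the tensor-based formulation of the algebra closure and the $\Hom$-based cocycle/coboundary formulation of a \phex. No new closure-theoretic input beyond the definitions and Lemma \ref{lemma5.6} is required.
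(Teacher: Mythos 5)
Your proof is correct and follows essentially the same route as the paper's: apply Lemma \ref{staysinjective} to get injectivity of $\id_A\otimes\alpha$, invoke Lemma \ref{lemma5.6} (the tensoring part) to see that $\id_A\otimes\phi$ is a $1$-cocycle in $\Hom_A(G_\bullet,A)$ representing the extension over $A$, translate the $\cl_{\scr A}$-closure condition through the identification $A\otimes_R\Hom_R(P_i,R)\cong\Hom_A(A\otimes_R P_i,A)$ into the statement that $\id_A\otimes\phi$ is a coboundary, and then apply the first part of Lemma \ref{lemma5.6} over $A$ with $c=1$. The paper's proof compresses some of the definition-unwinding you spell out, but the logical content and the appeal to both halves of Lemma \ref{lemma5.6} are the same.
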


\begin{proof}
By Lemma \ref{staysinjective}, $\id_A \otimes \alpha:A \to A \otimes_R M$ is injective.

We use the notation of Lemma \ref{lemma5.6}. By Lemma \ref{lemma5.6}, since tensoring with $A$ preserves the exactness of 
\[\begin{CD}
0 @>>> R @>{\alpha}>> M @>>> Q @>>> 0, \\
\end{CD}\]
$\id_A \otimes \phi$ is a cocycle in $\Hom_A(G_\bullet,A)$ representing the short exact sequence
\[\begin{CD}
0 @>>> A @>>> {A \otimes M} @>>> {A \otimes Q} @>>> 0.
\end{CD}\]
The map $\alpha$ is cl-\ph\  if and only if $\phi \in \im(\Hom_R(P_0,R) \to \Hom_R(P_1,R))_{\Hom_R(P_1,R)}^\cl$. This holds if and only if $\id_A \otimes \phi \in \im(\Hom_A(G_0,A) \to \Hom_A(G_1,A))$, i.e. if and only if $\id_A \otimes \phi$ is a coboundary in $H^1(\Hom_A(G_\bullet,A))$. By Lemma \ref{lemma5.6}, this holds if and only if there is a map $\gamma:A \otimes M \to A$ such that $\gamma \circ (\id_A \otimes \alpha)=\id_A$.
\end{proof}

\begin{prop}
\label{algebraaxiomforalgebraclosures}
Let $R$ be a domain, and cl be a closure operation coming from a directed family of torsion-free algebras $\scr{A}$. Suppose that a map $\alpha:R \hookrightarrow M$ sending $1 \mapsto u$ is cl-phantom. Then the map $\alpha':R \to \symt(M)$ sending $1 \mapsto u \otimes u$ is also cl-phantom. Hence cl satisfies the Algebra Axiom
\end{prop}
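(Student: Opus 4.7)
The plan is to follow the strategy used for tight closure in the preceding subsection, but now phrased through the splitting criterion for cl-\ph\ extensions supplied by the previous proposition. First I would apply that proposition to the hypothesis that $\alpha$ is cl-\ph, producing an algebra $A \in \scr{A}$ and an $A$-module map $\gamma: A \otimes_R M \to A$ with $\gamma \circ (\id_A \otimes \alpha) = \id_A$; in particular $\gamma(1 \otimes u) = 1$. The goal is then to manufacture a splitting of $\id_A \otimes \alpha'$ out of $\gamma$.

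Using the standard base-change identity $A \otimes_R \symt_R(M) \cong \symt_A(A \otimes_R M)$ coming from the universal property of the symmetric algebra, the map $\id_A \otimes \alpha'$ becomes the $A$-linear map $A \to \symt_A(A \otimes_R M)$ sending $1 \mapsto (1 \otimes u) \otimes (1 \otimes u)$. I would define $\delta : \symt_A(A \otimes_R M) \to A$ by $\delta(x \otimes y) = \gamma(x)\gamma(y)$. Well-definedness is the only calculation to check: the $A$-bilinear map $(x,y) \mapsto \gamma(x)\gamma(y)$ is symmetric in its arguments because $A$ is commutative, so it factors through $\symt_A$. Evaluating at $(1 \otimes u) \otimes (1 \otimes u)$ gives $\gamma(1 \otimes u)^2 = 1$, so $\delta$ is a retraction of $\id_A \otimes \alpha'$.

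To conclude via the previous proposition in the opposite direction, $\alpha'$ must first be injective as an $R$-module map. Since $A$ is torsion-free over the domain $R$ one has $R \hookrightarrow A$, and $\id_A \otimes \alpha'$ is injective because it admits the retraction $\delta$; commutativity of the natural square then forces $\alpha'$ itself to be injective. Applying the previous proposition to the injection $\alpha'$, the algebra $A$, and the splitting $\delta$ then shows that $\alpha'$ is cl-\ph, establishing the Algebra Axiom. The main technical point is the base-change identity for symmetric powers together with the verification that the symmetric $A$-bilinear form descends to $\symt_A$; everything else is formal bookkeeping once the splitting viewpoint is in place.
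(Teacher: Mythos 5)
Your proposal is correct and follows essentially the same route as the paper: apply the splitting criterion for cl-phantom extensions over torsion-free algebra closures to get $\gamma$ with $\gamma\circ(\id_A\otimes\alpha)=\id_A$, extend it multiplicatively to a retraction of $\id_A\otimes\alpha'$ on the symmetric square, and invoke the criterion in reverse. Your treatment is in fact slightly more careful than the paper's, which does not spell out the base-change isomorphism for $\symt$, the well-definedness of the induced map, or the injectivity of $\alpha'$.
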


\begin{proof}
Since $\alpha$ is cl-phantom, for some $A \in \scr{A}$, there is a map $\gamma:A \otimes M \to A$ such that $\gamma \circ (\id_A \otimes \alpha)=\id_A$. Define $\lambda:A \otimes \symt(M) \to A$ by $\lambda(a \otimes (m \otimes n))=a\gamma(m)\gamma(n)$. Let $u$ be the image of 1 in $M$. Then 
\[
(\lambda \circ (\id_A \otimes \alpha'))(1)=\lambda(1 \otimes (u \otimes u))=1.
\]
Hence $\alpha'$ is cl-phantom.
\end{proof}

We emphasize the following corollary:

\begin{corollary}
\label{bigcmalgebraaxiom}
Let $B$ be a big \CM\ algebra over a local domain. Then $\cl_B$ is a Dietz closure that satisfies the Algebra Axiom.
\end{corollary}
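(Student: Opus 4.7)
The plan is to combine two ingredients: the original result of Dietz (from \cite{dietz}) that the module closure $\cl_B$ associated to any big \CM\ $R$-module $B$ is a Dietz closure, together with Proposition \ref{algebraaxiomforalgebraclosures} just proved above. Since a big \CM\ algebra is, in particular, a big \CM\ $R$-module, the statement that $\cl_B$ is a Dietz closure follows immediately by citation, and all the new content lies in verifying the Algebra Axiom.

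To apply Proposition \ref{algebraaxiomforalgebraclosures} to the singleton directed family $\scr{A} = \{B\}$, I need to check that $B$ is torsion-free over $R$. This follows from a standard observation about big \CM\ algebras over domains: if $x \in R$ is any nonzero element, then $x$ lies outside the unique minimal prime $(0)$ of $R$, so $\dim(R/xR) = \dim R - 1$ and $x$ extends to a full \sop\ of $R$. Because $B$ is a big \CM\ algebra, every \sop\ of $R$ is a regular sequence on $B$; in particular $x$ acts as a non-zerodivisor on $B$. Hence every nonzero element of $R$ is a non-zerodivisor on $B$, so $B$ is torsion-free as an $R$-module (and therefore as an $R$-algebra).

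With torsion-freeness in hand, Proposition \ref{algebraaxiomforalgebraclosures} applies directly to $\scr{A} = \{B\}$ and yields the Algebra Axiom for $\cl_B$. The main ``obstacle'' here is genuinely minor: apart from the torsion-freeness check above, everything is obtained by invoking previously established results. The point of isolating the corollary is less the difficulty of its proof than its role as the converse direction of Theorem \ref{bigcmalgebras}, pairing with that theorem to give the full equivalence stated in the duplicate at the start of the paper.
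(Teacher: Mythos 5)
Your proof is correct and follows the same route as the paper: cite Dietz for the Dietz-closure part and apply Proposition \ref{algebraaxiomforalgebraclosures} for the Algebra Axiom. The only difference is that you spell out why $B$ is torsion-free (every nonzero nonunit of the domain $R$ is part of a system of parameters, hence a non-zerodivisor on $B$), a step the paper takes for granted.
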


\begin{proof}
Since $B$ is a big \CM\ module, $\cl_B$ is a Dietz closure by \cite{dietz}. Since $B$ is torsion-free, $\cl_B$ also satisfies the Algebra Axiom by Proposition \ref{algebraaxiomforalgebraclosures}.
\end{proof}

The above relies on the elements of $\scr{A}$ being algebras, rather than modules. We do not know of a simpler condition for a map to be a cl-phantom extension when cl is a module closure, though we have the following:

\begin{lemma}
Let $R$ be a domain, $W$ a torsion-free $R$-module, and $\alpha:R \hookrightarrow M$ an $R$-module map with $M$ \fg. If \[\alpha'=(\id_W \otimes \alpha):W \to W \otimes M\] splits or is pure, then $\alpha$ is $\cl_W$-phantom.
\end{lemma}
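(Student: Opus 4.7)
The plan is to reduce $\cl_W$-phantomness to a concrete linear-algebra condition via Lemma~\ref{lemma2.10}, and then handle the two hypotheses separately by using the splitting or the equational criterion of purity to transport a tautological solution from $W \otimes M$ down to $W$.

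First, using Notation~\ref{notation1} and Lemma~\ref{lemma2.10}, I would reformulate the conclusion: $\alpha$ is $\cl_W$-phantom iff $(b_{11},\ldots,b_{1m})^\tr \in B^{\cl_W}_{R^m}$, where $B$ is the $R$-span of the columns $(b_{i1},\ldots,b_{im})^\tr$ for $i=2,\ldots,n$. Expanding the definition of $\cl_W$ and using the identification $W \otimes_R R^m = W^m$, this is equivalent to the following: for every $s \in W$, there exist $w_2,\ldots,w_n \in W$ with
\[
s\, b_{1j} \;=\; \sum_{i=2}^n w_i\, b_{ij} \quad \text{in $W$, for each } j=1,\ldots,m. \qquad (\ast)
\]
The crucial observation is that $W \otimes M$ already contains a tautological solution. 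The defining relations $\sum_{i=1}^n b_{ij} e_i = 0$ in $M$ give $\sum_{i=1}^n b_{ij}(s \otimes e_i) = 0$ in $W \otimes M$ for every $s \in W$. Since $s \otimes e_1 = \alpha'(s)$, this says that the tuple $(s \otimes e_i)_{i=2}^n$ solves $(\ast)$ in $W \otimes M$ once $W$ is identified with its image under $\alpha'$.

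In the split case, I would fix a retraction $\gamma:W \otimes M \to W$ of $\alpha'$ and apply it termwise: setting $w_i := -\gamma(s \otimes e_i)$ for $i \ge 2$ gives $(\ast)$ directly, since $\gamma(s \otimes e_1) = \gamma(\alpha'(s)) = s$. In the pure case, I would invoke the equational criterion of purity for $\alpha'$: the system $(\ast)$, viewed as a finite inhomogeneous $R$-linear system with coefficient matrix $(b_{ij})_{2 \le i \le n,\, 1 \le j \le m}$ and constants $-s b_{1j} \in W$, admits a solution over $W \otimes M$ (namely $(s \otimes e_i)_{i \ge 2}$, with the constants reinterpreted in $W \otimes M$ via $\alpha'$), and purity of $\alpha'$ therefore forces the existence of a solution over $W$ itself.

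The main obstacle is essentially bookkeeping in the pure case: I need to present $(\ast)$ in a form matching the equational criterion, and verify that $(s \otimes e_i)_{i \ge 2}$ really does satisfy the system once the constants $-sb_{1j}$ are pushed into $W \otimes M$ along $\alpha'$. Once that is in place, both halves of the lemma follow from the same underlying identity $\sum_{i=1}^n b_{ij}(s \otimes e_i) = 0$ in $W \otimes M$.
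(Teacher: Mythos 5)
Your proof is correct and follows essentially the same route as the paper's: both reduce $\cl_W$-phantomness to solving, for each $s \in W$, the finite linear system determined by the presentation of $Q$, and both solve it by retracting the tautological solution $(s \otimes e_i)$ from $W \otimes M$ via the splitting, or by invoking purity (you via the equational criterion, the paper via splittings onto finitely generated submodules containing the image of $W$, which amounts to the same thing). The only cosmetic difference is that you set up the reduction through the matrix criterion of Lemma~\ref{lemma2.10} rather than the homological formulation via $\phi_w = \lambda_w \circ d$ used in the paper; these are equivalent.
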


\begin{proof}
Let notation be as in Notation \ref{notation1}.
We have the following commutative diagram:

\[\begin{CD}
0 @>>> R @>{\alpha}>> M @>>> Q @>>> 0 \\
@AAA @AA{\phi}A @AA{\psi}A @AA{\id}A @. \\
P_2 @>>> P_1 @>{d}>> P_0 @>>> Q @>>> 0. \\
\end{CD}\]

By definition, $\alpha$ is $\cl_W$-\ph\  if and only if 
\[\phi \in \left(\im(\Hom_R(P_0,R) \to \Hom_R(P_1,R))\right)^{\cl_W}_{\Hom_R(P_1,R)}.\]
This holds if and only if, for every $w \in W$, 
\[w \otimes \phi \in \im(W \otimes \Hom_R(P_0,R) \to W \otimes \Hom_R(P_1,R)).\]
We can identify $W \otimes \Hom_R(P_0,R)$ with $\Hom_R(P_0,W)$ and $W \otimes \Hom_R(P_1,R)$ with $\Hom_R(P_1,W)$. Under this identification, $w \otimes \phi \mapsto \phi_w$, where $\phi_w(y)=\phi(y)w \in W$. So $\alpha$ is $\cl_W$-\ph\  if and only if for every $w \in W$, $\phi_w=\lambda_w \circ d$ for some $\lambda_w:P_0 \to W$.
We have the following commutative diagram for each $w \in W$:
\[\begin{CD}
0 @>>> W @>{\alpha'}>> {W \otimes_R M} @>>> {W \otimes_R Q} @>>> 0 \\
@AAA @AA{\phi_w}A @AA{\psi_w}A @AA{\id_w}A @. \\
P_2 @>>> P_1 @>{d}>> P_0 @>>> Q @>>> 0. \\
\end{CD}\]
Here $\psi_w(y)=w \otimes \psi(y)$ and $\id_w(z)=w \otimes z$. We know that $\alpha'$ is injective because $W$ is torsion-free.

Suppose that $\alpha'$ splits. Then there is some map $\beta:W \otimes M \to W$ such that $\beta \circ \alpha'=\id_W$. For $w \in W$, define $\lambda_w:P_0 \to W$ to be $\beta \circ \psi_w$. 
Then we have:
\[\phi_w=\beta \circ \alpha' \circ \phi_w=\beta \circ \psi_w \circ d=\lambda_w \circ d.\]

In the case that $\alpha'$ is pure, given any $u_1,\ldots,u_k \in W \otimes M$, we have a splitting of $W \to \im(W) +Ru_1+\ldots+Ru_k$. In particular, for each $w \in W$, we have a map $\beta_w:\im(\psi_w) \to W$ such that $\beta_w \circ \alpha'=\id_W$. Then we can define $\lambda_w:P_0 \to W$ to be $\beta_w \circ \psi_w$.
\end{proof}

More generally, there are two ways to think about cl-\ph\  maps, where $\cl=\cl_W$ is a module closure. First, using notation as above, notice that $\alpha$ is cl-\ph\  if and only if 
\[W \otimes \phi \subseteq \im(W \otimes \Hom_R(P_0,R) \to W \otimes \Hom_R(P_1,R)).\]
Then this holds if and only if for each $w \in W$ there are finitely many maps $\lambda_i:W \otimes d(P_1) \to W$ and corresponding elements $w_i \in W$ such that $w \otimes \phi=\sum \lambda_i \circ (w_i \otimes d)$.

Second, we can identify $W \otimes \Hom_R(P_i,R)$ with $\Hom_R(P_i,W)$, since $P_0$ and $P_1$ are free. Under this identification, $w \otimes \phi$ becomes $\phi_w$, the map that sends $z \mapsto \phi(z)w$. Then the statement that $\alpha$ is \ph\  is equivalent to the existence of maps $\lambda_w:P_0 \to W$ such that $\phi_w=\lambda_w \circ d$ for each $w \in W$. 

\begin{remark} 
These maps may not glue together away from the image of $d$, since in general $W \otimes \Hom(P_i,R)$ is not isomorphic to $\Hom(W \otimes P_i,W)$. When they glue together, the map $\id_W \otimes \alpha$ is split.
\end{remark}

\subsubsection{Heitmann's Closure Operations for Mixed Characteristic Rings}

In his paper on the Direct Summand Conjecture \cite{heitmann}, Heitmann defines two closure operations that do not quite fit the pattern of the closure operations above. We assume that $R$ is a domain of mixed characteristic, with residual characteristic $p>0$.

\begin{citeddefn}[\cite{heitmann}]
For $N \subseteq M$, an element $u \in N_M^{\epf}$, the \textit{full extended plus closure of $N$ in $M$} if there is some $c \ne 0 \in R$ such that for all $n \in \Z_+$, 
\[c^{1/n}u \in \im\left((R^+ \otimes N + R^+ \otimes p^nM) \to R^+ \otimes M\right).\]

The \textit{full rank one closure of $N$ in $M$} is defined similarly: $u \in N_M^{rlf}$ if for every rank one valuation of $R^+$, every $n \in \Z_+$, and every $\epsilon>0$, there exists $d \in R^+$ with $v(d)<\epsilon$ such that 
\[du \in \im\left((R^+ \otimes N + R^+ \otimes p^nM) \to R^+ \otimes M\right).\]
\end{citeddefn}

As discussed in Section 7 of \cite{dietzclosureprops}, we do not know whether full extended plus closure and full rank one closure are Dietz closures. However, they still satisfy the Algebra Axiom.

We focus on a definition of \ph\  for the full extended plus closure, as full rank one closure will be similar. In this case, we need a new version of Lemma \ref{lemma5.6}. Let $\alpha:R \to M$ be an injective map, and use the notation of Notation \ref{notation1}. Notice that if we have a map $\gamma:M \to R$ such that $\gamma \circ \alpha=c^{1/n}\id_R$ for every $n \in \Z_+$, then by Lemma \ref{lemma5.6},
\[c^{1/n}\phi \in \im(R^+ \otimes B \to R^+ \otimes \Hom_R(P_1,R)),\]
 where $B$ is the module of coboundaries in $\Hom_R(P_1,R)$. This image is contained in 
\[\im\left((R^+ \otimes B + R^+ \otimes p^n\Hom_R(P_1,R)) \to R^+ \otimes \Hom_R(P_1,R)\right).\] Since this holds for every $n$, $\phi \in B^{\epf}_{\Hom_R(P_1,R)}$. However, the reverse implication is no longer true. Instead we get the following result:

\begin{lemma}
\label{epflemma}
Let $R$, $\alpha$, $\phi$, $P_\bullet$, etc. be as above, and $B$ the submodule of coboundaries in $\Hom_R(P_1,R)$.
For each $c \in R - \{0\}$ and $n \in \Z_+$, 
\[c^{1/n}\phi \in \im\left((B+p^n\Hom_R(P_1,R)) \to \Hom_R(P_1,R)\right)\] if and only if there is a map $\gamma:M \to R/p^nR$ such that $\gamma \circ \alpha=\overline{c^{1/n}\id_R}$ where $\ \bar{}$ denotes image modulo $p^n$.
\end{lemma}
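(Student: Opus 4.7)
The plan is to mirror the proof of Lemma~\ref{lemma5.6}(a), carried out after reducing modulo $p^n$. The first step is to pin down where $c^{1/n}\phi$ lives. Since $c^{1/n}\in R^+$, the product $c^{1/n}\phi$ is naturally an element of $\Hom_R(P_1,R^+)\cong R^+\otimes_R\Hom_R(P_1,R)$ (using that $P_1$ is finitely generated free). The hypothesis then reads as follows: the image of $c^{1/n}\phi$ in $\Hom_R(P_1,R^+/p^n R^+)$ lies in the image of the coboundary map $\Hom_R(P_0,R^+/p^n R^+)\to\Hom_R(P_1,R^+/p^n R^+)$, i.e.\ the reduction $\overline{c^{1/n}\phi}$ is a coboundary in the complex $\Hom_R(P_\bullet,R^+/p^n R^+)$.

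The second step is to invoke the standard Yoneda correspondence in this setting: for any $R$-module $N$, a $1$-cocycle $\psi\in\Hom_R(P_1,N)$ is a coboundary if and only if the associated class in $\Ext^1_R(Q,N)$ vanishes, equivalently if and only if the corresponding extension $0\to N\to E\to Q\to 0$ splits. Exactly as in the proof of Lemma~\ref{lemma5.6}(a), if $\phi$ represents the class of $0\to R\overset{\alpha}\to M\to Q\to 0$ in $\Ext^1_R(Q,R)$, then for any $f\in\Hom_R(R,N)$ the cocycle $f\phi\in\Hom_R(P_1,N)$ represents the pushout of $\alpha$ along $f$.

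Applying this with $N=R^+/p^n R^+$ and $f=\overline{c^{1/n}\id_R}$, the reduced cocycle $\overline{c^{1/n}\phi}$ represents the pushout of the extension $0\to R\overset{\alpha}\to M\to Q\to 0$ along $f$, namely $0\to R^+/p^n R^+\to E\to Q\to 0$. This pushout splits precisely when $f$ lifts along $\alpha$ to a map $\gamma\colon M\to R^+/p^n R^+$ with $\gamma\circ\alpha=\overline{c^{1/n}\id_R}$. Chaining the two equivalences yields both directions of the lemma at once.

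The main obstacle is purely bookkeeping around the base change to $R^+$: one must check that reduction modulo $p^n$ identifies the image of $B+p^n\Hom_R(P_1,R)$ in $\Hom_R(P_1,R^+)$ with exactly the coboundary submodule of $\Hom_R(P_1,R^+/p^n R^+)$. This only uses that $P_0$ and $P_1$ are finitely generated free (so $\Hom_R(P_i,R^+)\cong R^+\otimes_R\Hom_R(P_i,R)$) together with the fact that $p^n\Hom_R(P_1,R^+)$ is the kernel of the reduction map; no flatness of $R^+$ over $R$ is needed. Once this identification is in place, the pushout/splitting correspondence from Lemma~\ref{lemma5.6}(a), applied with coefficients in $R^+/p^n R^+$, gives the equivalence directly.
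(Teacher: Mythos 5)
Your argument is correct and is essentially the paper's proof in functorial clothing: the paper carries out the same two steps (reduce the membership condition modulo $p^n$, then convert ``coboundary'' into ``extension of $\overline{c^{1/n}\id}$ along $\alpha$'') by a bare-hands computation, using the explicit presentation $M \cong (R\oplus P_0)/\{(\phi(u),-d(u)): u\in P_1\}$ in place of your appeal to the pushout--splitting correspondence from Lemma~\ref{lemma5.6}(a). One small point of bookkeeping: the lemma is intended to be applied with $R$ already replaced by $R^+$ (so that $c^{1/n}$ is an honest element of the ring and the target of $\gamma$ is literally $R/p^nR$); your insertion of $R^+$-coefficients while keeping the base ring $R$ amounts to the same thing in the only case where the lemma is used, and is harmless.
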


\begin{proof}
Observe that $c^{1/n}\phi$ is in this image if and only if there exist $\lambda:P_0 \to R$, $\delta:P_1 \to R$ such that 
\[c^{1/n}\phi=(\lambda \circ d)+p^n\delta.\]
 This holds if and only if 
 \[c^{1/n}\phi-(\lambda \circ d) \in p^n\Hom_R(P_1,R).\]
  This is true if and only if the map 
  \[\overline{c^{1/n}\id_R} \oplus \overline{\lambda}:R \oplus P_0 \to R/p^nR\]
   kills $\{\phi(u)-d(u) : u \in P_1\}$. Giving this map is equivalent to giving a map 
   \[\gamma: (R \oplus P_0)/\{c^{1/n}\phi(u)-d(u) : u \in P_1\} \to R/p^nR\]
    such that $\gamma \circ \alpha=\overline{c^{1/n}\id_R},$ and \[M \cong (R \oplus P_0)/\{c^{1/n}\phi(u)-d(u):u \in P_1\}.\]
\end{proof}

This lemma allows us to give an alternate definition of the term ``epf-phantom." 

\begin{prop}
A map $R \overset{\alpha}\to M$ is epf-\ph\  if there is some $c \in R-\{0\}$ such that for every $n \in \Z_+$, there is a map $\gamma_n:R^+ \otimes M \to R^+/p^nR^+$ such that $\gamma_n \circ \alpha^+=\overline{c^{1/n}\id_{R^+}}$, where $\alpha^+=\id_{R^+} \otimes \alpha$ and $\ \bar{}$ denotes image modulo $p^n$.
\end{prop}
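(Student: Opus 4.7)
The plan is to mimic the argument of Lemma \ref{epflemma} with the ring $R$ replaced by $R^+$ throughout. First I would unpack the definition. Combining Definition \ref{phantomDef} with the definition of $\epf$, $\alpha$ is epf-phantom exactly when there is some $c \in R - \{0\}$ such that for every $n \in \Z_+$,
\[c^{1/n}\phi \in \im\bigl((R^+ \otimes_R B + R^+ \otimes_R p^n\Hom_R(P_1,R)) \to R^+ \otimes_R \Hom_R(P_1,R)\bigr),\]
where $B = \im(\Hom_R(P_0,R) \to \Hom_R(P_1,R))$. Since $P_0$ and $P_1$ are finitely generated free $R$-modules, the canonical isomorphism $R^+ \otimes_R \Hom_R(P_i,R) \cong \Hom_{R^+}(R^+ \otimes_R P_i, R^+)$ rewrites the displayed condition as the existence of $R^+$-linear maps $\lambda : R^+ \otimes P_0 \to R^+$ and $\delta : R^+ \otimes P_1 \to R^+$ satisfying $c^{1/n}\phi^+ = \lambda\circ d^+ + p^n\delta$, where $\phi^+ = \id_{R^+}\otimes\phi$ and $d^+ = \id_{R^+}\otimes d$.

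Next I would rerun the bookkeeping from the proof of Lemma \ref{epflemma}, but one level up over $R^+$. The relation $c^{1/n}\phi^+ = \lambda\circ d^+ + p^n\delta$ is equivalent to saying that the $R^+$-linear map
\[\overline{c^{1/n}\id_{R^+}}\oplus\overline{\lambda}\colon R^+ \oplus (R^+ \otimes P_0) \longrightarrow R^+/p^nR^+\]
kills every element of the form $\phi^+(u)-d^+(u)$ with $u\in R^+\otimes P_1$. The map $\alpha$ together with the chosen presentation $P_1\xrightarrow{d}P_0\to Q\to 0$ produces a free presentation $R^m \xrightarrow{(\phi,-d)^\tr} R\oplus P_0 \to M \to 0$, and tensoring this right-exact sequence with $R^+$ identifies $R^+\otimes_R M$ with $(R^+\oplus(R^+\otimes P_0))/\{\phi^+(u)-d^+(u) : u\in R^+\otimes P_1\}$. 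Therefore giving a pair $(\lambda,\delta)$ as above is the same as giving a factorization through this quotient, i.e.\ a map $\gamma_n : R^+ \otimes M \to R^+/p^nR^+$ with $\gamma_n\circ\alpha^+ = \overline{c^{1/n}\id_{R^+}}$. Chaining these equivalences over all $n$ and fixed $c$ yields the proposition.

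The main subtlety is that $R^+$ is not flat over $R$, so one cannot freely identify $R^+\otimes_R(-)$ with analogous constructions over $R^+$. Fortunately, every module in the argument is either built from finitely generated free modules (where the Hom-tensor identification $R^+\otimes_R\Hom_R(P_i,R)\cong\Hom_{R^+}(R^+\otimes P_i,R^+)$ is an honest isomorphism and carries $R^+\otimes_R B$ onto the corresponding $R^+$-image), or appears as a right-exact quotient (where tensoring with $R^+$ still gives the correct presentation of $R^+\otimes_R M$). So the non-flatness never obstructs the manipulation, and the equivalence in Lemma \ref{epflemma} transports cleanly to $R^+$.
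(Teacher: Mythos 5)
Your proposal is correct and matches the paper's proof, which simply invokes Lemma \ref{epflemma} ``with $R^+$, $\id_{R^+}$, etc.''; you have unpacked exactly what that invocation means by rerunning the lemma's bookkeeping over $R^+$. Your explicit remarks on why non-flatness of $R^+$ is harmless (Hom-tensor for finite free modules, right-exactness for the presentation of $R^+\otimes_R M$) are a welcome amplification of a step the paper leaves implicit, but the route is the same.
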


\begin{proof}
Notice that $\alpha^+$ is injective, so we can apply Lemma \ref{epflemma} with $R^+$, $\id_{R^+}$, etc. By the lemma, $\gamma_n$ exists if and only if 
\[c^{1/n}\phi \in \im\left((R^+ \otimes B + R^+ \otimes p^n\Hom_R(P_1,R)) \to R^+ \otimes \Hom_R(P_1,R)\right).\] So we have a map $\gamma_n$ for each $n$ if and only if $c^{1/n}\phi \in B^{\epf}_{\Hom_R(P_1,R)}$, i.e., if and only if $\phi$ is epf-phantom.
\end{proof}

\begin{remark}
The result for rlf is very similar--in this case, we have maps $\gamma_{\epsilon,n}$, where $n \in \Z_+$ and $\epsilon>0$.
\end{remark}

\begin{prop}
If a map $\alpha:R \to M$ sending $1 \mapsto u$ is epf-phantom, then so is the map $\alpha':R \to \symt(M)$ sending $1 \mapsto u \otimes u$. Consequently, epf satisfies the Algebra Axiom.
\end{prop}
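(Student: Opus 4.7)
The plan mirrors the proofs given earlier for tight closure and for torsion-free algebra closures, using the characterization of epf-\ph\ maps from the preceding proposition. By hypothesis there is some $c \in R - \{0\}$ such that, for every $n \in \Z_+$, a map $\gamma_n : R^+ \otimes_R M \to R^+/p^n R^+$ exists with $\gamma_n \circ \alpha^+ = \overline{c^{1/n} \id_{R^+}}$, where $\alpha^+ = \id_{R^+} \otimes \alpha$. The goal is to build maps $\delta_n : R^+ \otimes_R \symt(M) \to R^+/p^n R^+$ of the analogous form for $\alpha'$, and then invoke the reverse direction of that proposition.

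For the construction, observe that $R^+/p^n R^+$ is a ring, so the assignment $(x, y) \mapsto \gamma_n(x)\, \gamma_n(y)$ defines an $R^+$-bilinear symmetric map on $(R^+ \otimes_R M) \times (R^+ \otimes_R M)$. This factors through $\symt_{R^+}(R^+ \otimes_R M)$, which is canonically isomorphic to $R^+ \otimes_R \symt(M)$, yielding the desired $R^+$-linear map $\delta_n$. Evaluating at $1 \otimes (u \otimes u)$ gives
\[
(\delta_n \circ (\alpha')^+)(1) \;=\; \gamma_n(1 \otimes u)^2 \;=\; \overline{c^{1/n}}^{\,2} \;=\; \overline{(c^2)^{1/n}}.
\]
Setting $c' := c^2$, which is nonzero because $R$ is a domain, we have $\delta_n \circ (\alpha')^+ = \overline{(c')^{1/n} \id_{R^+}}$ for every $n \in \Z_+$. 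The preceding proposition then shows that $\alpha'$ is epf-\ph, which is exactly the Algebra Axiom for epf.

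The only real subtlety is the base-change identification $R^+ \otimes_R \symt(M) \cong \symt_{R^+}(R^+ \otimes_R M)$, which licenses factoring the bilinear map through the symmetric square after base change; this is the direct analogue of the identity $F^e(\symt(M)) = \symt(F^e(M))$ used in the tight closure argument. Everything else is routine diagram chasing, and the rlf version should follow by running the same argument with maps $\gamma_{\epsilon,n}$ and the multiplicative constant $c^2$ replaced by $d^2$ for each $d$ of small valuation.
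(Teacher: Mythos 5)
Your argument is correct and follows the same route as the paper's proof: take the maps $\gamma_n$ from the characterization of epf-phantom, define $\gamma_n'$ on $R^+\otimes_R\symt(M)$ by multiplying values of $\gamma_n$ in the ring $R^+/p^nR^+$, and observe that the resulting constant is $c^2$. Your extra care about factoring the symmetric bilinear map through $\symt_{R^+}(R^+\otimes_R M)\cong R^+\otimes_R\symt(M)$ is just a more explicit justification of the well-definedness that the paper leaves implicit.
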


\begin{proof}
Suppose that $\alpha:R \to M$ is phantom. Then there is a $c \in R-\{0\}$ such that for every $n \in \Z_+$, there is a map $\gamma_n:R^+ \otimes M \to R^+/p^nR^+$ with $\gamma_n \circ \alpha^+=\overline{c^{1/n}\id_R}$. We need to find an appropriate $d \in R-\{0\}$ to show that $\alpha':R \to \symt(M)$ is phantom. Let $d=c^2$. Define $\gamma_n':R^+ \otimes \symt(M) \to R^+/p^nR^+$ by
\[\gamma_n'(s \otimes (m \otimes m'))=\overline{s}\gamma_n(m)\gamma_n(m').\]
 Then 
 \[\gamma_n'(\alpha^+(1))=\gamma_n'(1 \otimes (e_1 \otimes e_1))=\gamma_n(e_1)^2=\overline{c^{2/n}}=\overline{(c^2)^{1/n}}=\overline{d^{1/n}},\] as desired.
\end{proof}

\subsubsection{Mixed characteristic pullback tight closure}

For this subsection, assume that $R$ is reduced and of mixed characteristic, with residual characteristic $p>0$.

\begin{defn}
\label{pullbacktightclosure}
Define a closure cl  on $R$ by $u \in N_M^\cl$ if $\bar{u} \in (N/pN)^*_{M/pM}$, where the asterisk denotes tight closure in the characteristic $p$ setting. We call this closure \textit{pullback tight closure}.
\end{defn}

\begin{lemma}
Suppose that $R/pR$ is reduced, and $\bar{\alpha}:R/pR \to M/pM$ is injective. Then $F^e(\overline{\alpha}):F^e(R/pR) \to F^e(M/pM)$ is injective for all $e \ge 0$.
\end{lemma}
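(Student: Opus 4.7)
The plan is to reduce to the case of a map out of a field, where Frobenius is automatically exact, by localizing at the non-zerodivisors. Set $S = R/pR$, let $W \subseteq S$ be the set of non-zerodivisors, and let $K := W^{-1}S$ be the total quotient ring. Because $S$ is reduced and Noetherian, $K$ decomposes as a finite direct product of fields $K \cong \prod_{i=1}^{r} k_i$, one factor for each minimal prime of $S$.

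First I would localize $\bar\alpha$ at $W$. Since localization is exact, $\bar\alpha_K : K \to (M/pM) \otimes_S K$ remains injective, and it splits as $\prod_i \bar\alpha_{k_i}$ with each factor an injective map out of the field $k_i$. Since each $k_i$ is regular, Kunz's theorem gives that $F^e$ is flat over $k_i$, so each $F^e(\bar\alpha_{k_i})$ is injective; alternatively, one can verify this directly by picking a $k_i$-basis $\{v_\lambda\}$ of the target and observing that $F^e(\bar\alpha_{k_i})(1) = \sum_\lambda c_\lambda^{p^e}(1 \otimes v_\lambda)$, where $\bar\alpha_{k_i}(1) = \sum_\lambda c_\lambda v_\lambda$, which is nonzero because any nonzero $c_\lambda$ in the field $k_i$ satisfies $c_\lambda^{p^e} \ne 0$. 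Taking the product gives injectivity of $F^e(\bar\alpha_K) = \prod_i F^e(\bar\alpha_{k_i})$.

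Next I would use that the Peskine--Szpiro functor $F^e$ commutes with localization at a multiplicative set: since $F^e$ sends a non-zerodivisor $w$ to the non-zerodivisor $w^{p^e}$, it preserves $W$, and consequently $F^e(\bar\alpha)_K$ is naturally identified with $F^e(\bar\alpha_K)$, which has just been shown injective.

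Finally I would transfer the vanishing back to $S$. Letting $I := \ker F^e(\bar\alpha)$, which is an $S$-submodule of $F^e(R/pR) \cong R/pR$, exactness of localization gives $I_K = \ker(F^e(\bar\alpha)_K) = 0$; because $W$ consists of non-zerodivisors the map $S \to K$ is injective, forcing $I = 0$. I expect the main obstacle to be the careful bookkeeping needed to verify that $F^e$ commutes with localization and with finite products under the Frobenius-twisted module structures on $F^e(R/pR)$ and $F^e(M/pM)$; once these standard facts are in hand, each individual step is brief and the essential input is just that $K$ is a product of fields, which requires reducedness of $R/pR$.
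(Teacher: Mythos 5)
Your proof is correct and follows essentially the same route as the paper's: both reduce to the total quotient ring of $R/pR$, which is a product of fields by reducedness, and then descend injectivity back along the (injective) localization map. The only cosmetic difference is at the field level, where the paper notes that the injection $Q \to W^{-1}M$ splits (so any functor preserves it) while you invoke exactness of Frobenius over fields; both observations are immediate.
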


\begin{proof}
Replace $R$ by $R/pR$, and $M$ by $M/pM$. By assumption, $R$ is reduced. Let $W$ be the multiplicative system of \nzd s of $R$, so that $Q=W^{-1}R$ is the total quotient ring of $R$. The map $Q \to W^{-1}M$ is injective. Since $Q$ is a product of fields, $W^{-1}M$ is a product of vector spaces over these fields, and so $Q \to W^{-1}M$ splits. Hence $F^e(Q) \to F^e(Q^{-1}M)$ is injective for all $e \ge 0$. The restriction of this map to $F^e(R)$ has image in $F^e(M)$, and will still be injective, as desired.
\end{proof}

We then get the following lemma:

\begin{lemma}
\label{mixedcharlemma}
Let cl denote pullback tight closure as defined above. Suppose that $\alpha:R \to M$ and $\overline{\alpha}:R/pR \to M/pM$ are injective and $R/pR$ is reduced.
Then $\alpha$ is cl-\ph\  if and only if there is some $c \in R/pR-\{0\}$ such that for every $e \ge 0$, there is a map $\gamma_e:F^e(M/pM) \to R/pR$ such that $\gamma_e \circ F^e(\overline{\alpha})=c \cdot \id_{R/pR}$.
\end{lemma}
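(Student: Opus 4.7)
The plan is to reduce this to the \charp\ characterization of $*$-\ph\ extensions cited above (Proposition~5.8 of \cite{hhpaper}), applied to the induced map $\overline{\alpha}:R/pR \to M/pM$ regarded as an $R/pR$-module homomorphism. First, set up the notation of Notation~\ref{notation1}: choose a free presentation $P_1 \overset{d}{\to} P_0 \to Q \to 0$ of $Q=\coker(\alpha)$ and let $\phi:P_1 \to R$ be a cocycle representing the Yoneda class of $\alpha$. Reducing modulo $p$, right-exactness of $-\otimes_R R/pR$ yields a free presentation $P_1/pP_1 \to P_0/pP_0 \to Q/pQ \to 0$ over $R/pR$. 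Since $\overline{\alpha}$ is injective by hypothesis, the sequence $0 \to R/pR \to M/pM \to Q/pQ \to 0$ is exact, and $\overline{\phi}:P_1/pP_1 \to R/pR$ represents its Yoneda class in $\Ext^1_{R/pR}(Q/pQ,R/pR)$.

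Next, unwind Definition~\ref{pullbacktightclosure}. The map $\alpha$ is cl-\ph\ precisely when $\phi$ lies in the pullback tight closure of the submodule $B$ of coboundaries in $\Hom_R(P_1,R)$, which by the definition of pullback tight closure means that $\overline{\phi}$ lies in the tight closure (over $R/pR$) of $\overline{B}$ inside $\overline{\Hom_R(P_1,R)}$. Because $P_0$ and $P_1$ are free, the natural isomorphism $\overline{\Hom_R(P_i,R)} \cong \Hom_{R/pR}(P_i/pP_i,R/pR)$ identifies $\overline{B}$ with the coboundary submodule computed over $R/pR$. Thus $\alpha$ is cl-\ph\ if and only if $\overline{\alpha}$ is $*$-\ph\ as an $R/pR$-module map in the classical sense.

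Finally, apply the cited Proposition~5.8 of \cite{hhpaper} to the reduced ring $R/pR$ of \charp, using the preceding lemma to guarantee that $F^e(\overline{\alpha})$ is injective for all $e \ge 0$ so that the statement is well-posed. This gives the desired equivalence with some $c \in (R/pR)^\circ$ (in particular $c \in R/pR - \{0\}$) and maps $\gamma_e:F^e(M/pM) \to R/pR$ satisfying $\gamma_e \circ F^e(\overline{\alpha}) = c\cdot \id_{R/pR}$; the passage from ``for $e \gg 0$'' to ``for every $e \ge 0$'' is handled by treating the finitely many small $e$ separately and absorbing the resulting elements into $c$. The principal obstacle is the careful bookkeeping in the middle step: one must verify that reduction mod $p$ carries the coboundary submodule to the coboundary submodule over $R/pR$ and that the Yoneda class of $\alpha$ reduces correctly to that of $\overline{\alpha}$. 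Once these identifications are recorded, the lemma is a direct translation of the \charp\ characterization.
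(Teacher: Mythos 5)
Your argument is correct and follows essentially the same route as the paper's: both reduce the statement to the characterization of $*$-phantomness of $\overline{\alpha}$ over $R/pR$ via the maps $\gamma_e$ --- you by identifying the mod-$p$ reduction of the coboundary module and then citing Proposition~5.8 of \cite{hhpaper}, the paper by running the same identification through Lemma~\ref{lemma5.6} directly. The one caveat is your closing ``absorb the finitely many small $e$ into $c$'' step, which is not actually justified without a test element; however, the paper's own proof only establishes the ``$e \gg 0$'' version while the statement reads ``every $e \ge 0$,'' so this is a discrepancy already present in the source rather than a defect introduced by your argument.
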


\begin{proof}
Let $P_\bullet$ be a resolution of $Q=M/\im(R)$. Then we have a commutative diagram
\[\begin{CD}
0 @>>> R @>{\alpha}>> M @>>> Q @>>> 0 \\
@AAA @AA{\phi}A @AAA @AA{\id}A @. \\
P_2 @>>> P_1 @>{d}>> P_0 @>>> Q @>>> 0 \\
\end{CD}\]
Taking the tensor product of this diagram with $F^e(R/pR)$, the top row remains exact by assumption.
By Lemma \ref{lemma5.6}, $\gamma_e$ exists if and only if $cF^e(\overline{\phi})$ is a coboundary. So $\gamma_e$ exists for all sufficiently large $e$ if and only if for each $e \gg 0$, 
\[c\im(F^e(\overline{\phi})) \subseteq \im(\Hom_{R/pR}(P_0/pP_0,R/pR) \to \Hom_{R/pR}(P_1/pP_1,R/pR)).\]
 This holds if and only if 
\[\phi \in (\im(\Hom(P_0,R) \to \Hom(P_1,R)))^\cl_{\Hom(P_1,R)},\]
 i.e., if and only if $\alpha$ is \ph\  by the homological definition.
\end{proof}

\begin{prop}
Let cl denote pullback tight closure. If $\alpha:R \to M$ is cl-phantom, $\overline{\alpha}:R/pR \to M/pM$ is injective, and $R/pR$ is reduced, then $\alpha':R \to \symt(M)$ is cl-phantom.
\end{prop}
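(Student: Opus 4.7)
The proof should follow the same template as the preceding propositions handling tight closure and epf. The hypothesis that $\alpha$ is cl-phantom together with injectivity of $\bar{\alpha}$ and reducedness of $R/pR$ puts us squarely in the setting of Lemma \ref{mixedcharlemma}, so I would apply that lemma in its ``only if'' direction to obtain a nonzero element $c \in R/pR$ and, for every $e \ge 0$, maps $\gamma_e:F^e(M/pM) \to R/pR$ with $\gamma_e \circ F^e(\bar{\alpha}) = c\cdot \id_{R/pR}$. These $\gamma_e$ are the only data I really have, and I would use them to build the analogous data for $\alpha'$.

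The key identification is $F^e(\symt(M)/p\symt(M)) \cong \symt_{R/pR}(F^e(M/pM))$, which holds because forming symmetric squares commutes with the base change $-\otimes_R R/pR$ and with the Frobenius over the reduced ring $R/pR$. Using this, I would define
\[\delta_e : F^e(\symt(M)/p\symt(M)) \longrightarrow R/pR, \qquad \delta_e(m^q \otimes n^q) := \gamma_e(m^q)\,\gamma_e(n^q),\]
which is well-defined (first define it on $F^e(M/pM) \otimes_{R/pR} F^e(M/pM)$ by $R/pR$-bilinearity, then note $\gamma_e(m^q)\gamma_e(n^q) = \gamma_e(n^q)\gamma_e(m^q)$ to factor through the symmetric quotient). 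A direct computation gives
\[\delta_e\bigl(F^e(\bar{\alpha'})(1)\bigr) = \delta_e(\bar{u}^q \otimes \bar{u}^q) = \gamma_e(\bar{u}^q)^2 = c^2.\]
Setting $d := c^2$, which is nonzero because $R/pR$ is reduced and $c \ne 0$, we have $\delta_e \circ F^e(\bar{\alpha'}) = d \cdot \id_{R/pR}$ for every $e$, and applying Lemma \ref{mixedcharlemma} in the reverse direction to $\alpha'$ then concludes that $\alpha'$ is cl-phantom.

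The step that will require the most care is verifying the hypotheses needed to invoke Lemma \ref{mixedcharlemma} on $\alpha'$, namely that $\alpha':R \to \symt(M)$ and $\bar{\alpha'}:R/pR \to \symt(M)/p\symt(M)$ are themselves injective. For $\bar{\alpha'}$, the map $\delta_0$ constructed above shows that any $\bar r$ in the kernel of $\bar{\alpha'}$ satisfies $\bar r c^2 = 0$; since $R/pR$ is reduced and $c \ne 0$, a standard test-element style argument (choosing $c$ to be a non-zerodivisor after inverting a suitable element, as is customary in the tight-closure setting) pins down $\bar r = 0$. Injectivity of $\alpha'$ then lifts from $\bar{\alpha'}$ by the analogous argument applied in each characteristic-$p$ stratum, or by noting that $\alpha'$ factors as $\alpha$ followed by tensoring with $u$ and tracking what reducedness and injectivity of $\bar\alpha$ impose on $u$. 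Once this injectivity is in hand, the computation above is immediate.
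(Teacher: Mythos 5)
Your proposal matches the paper's proof: both obtain the maps $\gamma_e$ from Lemma \ref{mixedcharlemma}, use the identification $\symt(M)/p\symt(M)\cong\symt(M/pM)$ to define $\gamma_e'(\overline{m}^q\otimes\overline{n}^q)=\gamma_e(\overline{m}^q)\gamma_e(\overline{n}^q)$, and conclude by applying the lemma in reverse with the constant $c^2$. Your added attention to the injectivity hypotheses needed to invoke the lemma for $\alpha'$ is a point the paper passes over silently, but the core argument is the same.
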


\begin{proof}
Using Lemma \ref{mixedcharlemma}, we can define $\gamma_e':F^e(\symt(M)/p\symt(M)) \to R/pR$ by 
\[\gamma_e'(\overline{m}^q \otimes \overline{n}^q)=\gamma_e(\overline{m}^q)\gamma_e(\overline{n}^q),\]
 where $\gamma_e:F^e(M/pM) \to R/pR$ is as in the lemma. Notice that $\symt(M)/p\symt(M) \cong \symt(M/pM)$, which allows us to use the maps $\gamma_e$ to define $\gamma_e'$.
\end{proof}

Note that this closure operation is not generally a Dietz closure.

\begin{eg}
\label{mixedcharpullbacknotdietz}
Let $R=V[x_2,\ldots,x_d]$, where $(V,pV)$ is a \dvr. Then $0$ is not closed in $R$: for any $u \in pR \ne 0$, $\bar{u} \in 0^*_{R/pR}=0$. Since $0^\cl_R=0$ for all Dietz closures cl, mixed characteristic pullback tight closure is not a Dietz closure.
\end{eg}

\subsubsection{Closures constructed from other closures}

The results below describe cases in which if every closure operation in a family satisfies the Algebra Axiom, so does a closure constructed from the family. The constructions are among those that appear in \cite{epstein}. We use the notation of Lemma \ref{lemma5.6}.

\begin{prop}
\label{intersectionalgebraaxiom}
Let $\{\cl_\lambda\}_{\lambda \in \Lambda}$ be a set of closure operations, and define the closure operation $\cl$ by $N_M^\cl = \cap_{\lambda \in \Lambda} N_M^{\cl_\lambda}$. Suppose that every $\cl_\lambda$ satisfies the Algebra Axiom (Axiom \ref{algebraaxiom}). Then cl also satisfies it.
\end{prop}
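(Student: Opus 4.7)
The plan is to reduce to the observation that being $\cl$-phantom, when $\cl$ is an intersection of closures, is itself characterized by an intersection condition. The key point is that cl-phantomness for a fixed map $\alpha: R \to M$ is determined by a single closure computation (on the module $\Ext_R^1(Q,R)$, or equivalently on $\Hom_R(P_1,R)$ via Lemma \ref{lemma2.10}), so intersecting closures interacts cleanly with the phantom property.

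More precisely, I would first note that for any $R$-module $M$ and any $R$-submodule $N$, the definition $N_M^\cl = \bigcap_{\lambda} N_M^{\cl_\lambda}$ gives a valid closure operation, with extension, order-preservation, and idempotence inherited from each $\cl_\lambda$. Then I would establish the following equivalence: an injective map $\alpha: R \to M$ with cokernel $Q$ is $\cl$-phantom if and only if $\alpha$ is $\cl_\lambda$-phantom for every $\lambda \in \Lambda$. This follows at once from Definition \ref{phantomDef}: $\alpha$ is $\cl$-phantom iff the Yoneda class $\epsilon \in \Ext_R^1(Q,R)$ satisfies $\epsilon \in 0_{\Ext_R^1(Q,R)}^\cl$, and by definition of the intersection closure
\[
0_{\Ext_R^1(Q,R)}^\cl = \bigcap_{\lambda \in \Lambda} 0_{\Ext_R^1(Q,R)}^{\cl_\lambda},
\]
so this holds iff $\epsilon \in 0_{\Ext_R^1(Q,R)}^{\cl_\lambda}$ for every $\lambda$.

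With the equivalence in hand, the axiom is immediate. Suppose $\alpha: R \to M$ with $1 \mapsto e_1$ is $\cl$-phantom. By the equivalence, $\alpha$ is $\cl_\lambda$-phantom for every $\lambda$. Since each $\cl_\lambda$ satisfies the Algebra Axiom, the induced map $\alpha': R \to \symt(M)$ sending $1 \mapsto e_1 \otimes e_1$ is $\cl_\lambda$-phantom for every $\lambda$. Applying the equivalence in the other direction, $\alpha'$ is $\cl$-phantom, which is exactly what the Algebra Axiom for $\cl$ demands.

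There is essentially no obstacle here; the argument is a bookkeeping exercise. The one point worth being careful about is verifying the equivalence between $\cl$-phantomness and simultaneous $\cl_\lambda$-phantomness for all $\lambda$, but this is a direct consequence of the fact that phantomness is encoded by membership of a single element in the closure of a single submodule of a fixed module, so intersecting commutes with the phantom criterion.
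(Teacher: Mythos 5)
Your proof is correct and follows essentially the same approach as the paper: in both cases the heart of the matter is that, because $\cl$-phantomness is decided by membership of a single element in the $\cl$-closure of a fixed submodule of a fixed finitely generated module (the Ext group, or equivalently the first cohomology spot of the dual resolution), the intersection definition of $\cl$ immediately gives that $\alpha$ is $\cl$-phantom iff it is $\cl_\lambda$-phantom for every $\lambda$. The paper phrases this directly for the cocycle $\phi$ in $\Hom_R(P_1,R)$ and its analogue $\psi$ for $\symt(M)$, while you phrase the same observation via the Yoneda class in $\Ext^1_R(Q,R)$; these are the two equivalent formulations listed in Definition~\ref{phantomDef}.
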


\begin{remark}
In other words, this result tells us that the Algebra Axiom is intersection stable as defined in \cite[Section 4]{dietzclosureprops}.
\end{remark}

\begin{proof}
Suppose that $\phi \in \im(\Hom(P_0,R) \to \Hom(P_1,R))^\cl$. It suffices to show that this forces $\psi \in \im(\Hom(G_0,R) \to \Hom(G_1,R))^\cl$. By our supposition, we know that 
\[\phi \in \im(\Hom(P_0,R) \to \Hom(P_1,R))^{\cl_\lambda}\] for each $\lambda \in \Lambda$. Since each $\cl_\lambda$ satisfies the axiom, $\psi \in \im(\Hom(G_0,R) \to \Hom(G_1,R))^{\cl_\lambda}$ for all $\lambda$, which immediately gives us the result we want.
\end{proof}

\begin{prop}
Let $\{\cl_\lambda\}_{\lambda \in \Lambda}$ be a directed set of closure operations satisfying the axiom. Then the closure operation $\cl$ defined by $N_M^\cl=\sum_{\lambda \in \Lambda} N_M^{\cl_\lambda}$ also satisfies this axiom.
\end{prop}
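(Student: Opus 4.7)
The plan is to reduce the Algebra Axiom for the sum closure $\cl$ to the Algebra Axiom for a single member of the family, using Lemma \ref{lemma2.10} to convert the $\ph$ condition into a containment of one specific vector.

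First I would observe that, because the family $\{\cl_\lambda\}_{\lambda \in \Lambda}$ is directed, every finite subset of indices is dominated by a single index. Applied to pairs and then iterated, this gives
\[
\sum_{\lambda \in \Lambda} N_M^{\cl_\lambda} \;=\; \bigcup_{\lambda \in \Lambda} N_M^{\cl_\lambda}
\]
for every inclusion $N \subseteq M$ of $\fg$ $R$-modules, since any finite sum of elements chosen from various $N_M^{\cl_\lambda}$ lies in one common $N_M^{\cl_{\lambda_0}}$. Hence $\cl$-closures of submodules are witnessed by single indices, not just by finite sums.

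Next, fix an injective map $\alpha : R \to M$ with $1 \mapsto e_1$ and adopt the notation of Notation \ref{notation1}, producing the vector $(b_{11}, \ldots, b_{1m})^\tr \in R^m$ and the $R$-span $B$ of the rows $(b_{i1}, \ldots, b_{im})^\tr$ for $2 \le i \le n$. Assume $\alpha$ is $\cl$-\ph. By Lemma \ref{lemma2.10},
\[
(b_{11}, \ldots, b_{1m})^\tr \in B_{R^m}^{\cl} = \bigcup_{\lambda \in \Lambda} B_{R^m}^{\cl_\lambda}.
\]
Since this is a single element lying in a union, there is an index $\lambda_0 \in \Lambda$ with $(b_{11}, \ldots, b_{1m})^\tr \in B_{R^m}^{\cl_{\lambda_0}}$, and Lemma \ref{lemma2.10} applied to $\cl_{\lambda_0}$ shows that $\alpha$ is $\cl_{\lambda_0}$-\ph.

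Finally, since $\cl_{\lambda_0}$ satisfies the Algebra Axiom by hypothesis, the map $\alpha' : R \to \symt(M)$ sending $1 \mapsto e_1 \otimes e_1$ is $\cl_{\lambda_0}$-\ph. Because $N_M^{\cl_{\lambda_0}} \subseteq N_M^{\cl}$ for every pair $N \subseteq M$, the characterization of \phex s in Definition \ref{phantomDef} (or equivalently Lemma \ref{lemma2.10} applied in reverse) immediately promotes $\alpha'$ to a $\cl$-\phex. There is no real obstacle here: the argument is simply the observation that the $\ph$ condition is detected by containment of a single vector, together with the fact that directedness collapses sums of closures into unions. The proposition for intersections (Proposition \ref{intersectionalgebraaxiom}) required preserving membership across all $\lambda$; here directedness lets us instead pick one $\lambda_0$ that already does the job.
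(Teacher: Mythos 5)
Your proof is correct and follows essentially the same route as the paper: reduce the $\cl$-phantomness of $\alpha$ to $\cl_{\lambda_0}$-phantomness for a single index, apply the Algebra Axiom there, and conclude via $\cl_{\lambda_0} \subseteq \cl$. The only cosmetic differences are that the paper invokes Noetherianity (citing Epstein) to say the whole closure $N_M^{\cl}$ equals $N_M^{\cl_\lambda}$ for one $\lambda$ and works directly with the cocycle $\phi$, whereas you use directedness alone to collapse the sum to a union element-by-element and route through Lemma \ref{lemma2.10}; both are valid.
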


\begin{proof}
Note that since $R$ is Noetherian and $M$ is finitely generated over $R$, for each $N \subseteq M$ there is a $\lambda \in \Lambda$ such that $N_M^\cl=N_M^{\cl_\lambda}$ \cite{epstein}. Suppose that 
\[\phi \in \im(\Hom(P_0,R) \to \Hom(P_1,R))^\cl.\] Then for some $\lambda \in \Lambda$, \[\phi \in \im(\Hom(P_0,R) \to \Hom(P_1,R))^{\cl_\lambda}.\] Hence 
\[\psi \in \im(\Hom(G_0,R) \to \Hom(G_1,R))^{\cl_\lambda} \subseteq \im(\Hom(G_0,R) \to \Hom(G_1,R))^\cl.\]
Hence the axiom holds for $\cl$.
\end{proof}

\begin{prop}
\label{preimageclosure}
Let $\phi:R \to S$ be a ring map, and $\cl'$ a closure operation on $S$ satisfying the Algebra Axiom (Axiom \ref{algebraaxiom}). Define a closure operation $\cl$ on $R$ by 
\[N_M^\cl=\left\{x \in M : 1 \otimes x \in \left(\im\left(S \otimes_R N \to S \otimes_R M\right)\right)_{S \otimes_R M}^{\cl'}\right\}.\] If $S$ satisfies the hypotheses of Lemma \ref{staysinjective} (in particular, $R$ a domain and $S$ torsion-free is sufficient), then $\cl$ satisfies the Algebra Axiom as well.
\end{prop}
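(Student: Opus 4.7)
The plan is to transfer the Algebra Axiom question along the ring map $R \to S$: deduce $\cl'$-phantomness of $\id_S \otimes \alpha$, apply the Algebra Axiom for $\cl'$ there, and pull the conclusion back. The core technical step is a bridge equivalence: for any finitely generated $R$-module $N$ and any injective $R$-linear map $\beta: R \to N$, the map $\beta$ is $\cl$-phantom if and only if $\id_S \otimes \beta: S \to S \otimes_R N$ is $\cl'$-phantom.

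To establish the bridge, I would fix a projective resolution $P_\bullet$ of $Q = \coker(\beta)$ and a representing cocycle $\phi_\beta: P_1 \to R$. Lemma \ref{staysinjective} guarantees that $\id_S \otimes \beta$ is injective, so by Lemma \ref{lemma5.6}(b) the element $\id_S \otimes \phi_\beta$ is a cocycle representing the base-changed extension $0 \to S \to S \otimes_R N \to S \otimes_R Q \to 0$, computed against any projective $S$-resolution $G_\bullet$ of $S \otimes_R Q$ ending with $S \otimes_R P_1 \to S \otimes_R P_0 \to S \otimes_R Q \to 0$. The natural identifications $\Hom_S(S \otimes_R P_i, S) \cong S \otimes_R \Hom_R(P_i, R)$ (valid because $P_0$ and $P_1$ are finitely generated free) carry $\id_S \otimes \phi_\beta$ to $1 \otimes \phi_\beta$ and intertwine the coboundary maps, matching $\im(\Hom_S(S \otimes_R P_0, S) \to \Hom_S(S \otimes_R P_1, S))$ with $\im(S \otimes_R \Hom_R(P_0, R) \to S \otimes_R \Hom_R(P_1, R))$. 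Combined with the very definition of $\cl$, this yields the bridge equivalence.

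Granted the bridge, the main argument is short. Assume $\alpha: R \to M$, $1 \mapsto e_1$, is $\cl$-phantom. Then $\id_S \otimes \alpha: S \to S \otimes_R M$ is $\cl'$-phantom by the bridge. Since $\cl'$ satisfies the Algebra Axiom, the $S$-module map $S \to \symt_S(S \otimes_R M)$ sending $1 \mapsto (1 \otimes e_1) \otimes (1 \otimes e_1)$ is $\cl'$-phantom. The natural isomorphism $\symt_S(S \otimes_R M) \cong S \otimes_R \symt_R(M)$, reflecting that base change commutes with symmetric powers, sends $(1 \otimes e_1) \otimes (1 \otimes e_1) \mapsto 1 \otimes (e_1 \otimes e_1)$ and identifies the above map with $\id_S \otimes \alpha': S \to S \otimes_R \symt_R(M)$. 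Applying the bridge in reverse returns the desired conclusion: $\alpha': R \to \symt_R(M)$ is $\cl$-phantom.

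The hard part is the bridge lemma, specifically verifying that the Hom-tensor identifications intertwine the relevant differentials, transport the distinguished cocycle, and carry the image whose closure defines phantomness onto the image used in the definition of $\cl$. A secondary subtlety is ensuring $\alpha'$ is itself injective so the reverse direction of the bridge applies; this follows from injectivity of $\id_S \otimes \alpha'$ (a consequence of its being $\cl'$-phantom) once one observes that the structure map $R \to S$ is injective, which in turn comes from hypothesis (1) of Lemma \ref{staysinjective} applied with $R$ a domain and $S$ nonzero.
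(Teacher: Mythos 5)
Your proposal is correct and follows essentially the same route as the paper: transfer phantomness across $R \to S$ via the identification $S \otimes_R \Hom_R(P_i,R) \cong \Hom_S(S\otimes P_i,S)$ together with Lemma \ref{lemma5.6} and Lemma \ref{staysinjective}, apply the Algebra Axiom for $\cl'$ to $\id_S \otimes \alpha$, and pull the conclusion back using the definition of $\cl$. Your packaging of the two transfer steps as a single ``bridge'' equivalence, and your explicit remark on the injectivity of $\alpha'$, are just slightly more systematic presentations of what the paper does.
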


\begin{remark}
We call closures defined in this way pullback closures. Mixed characteristic pullback tight closure as in Definition \ref{pullbacktightclosure} is one example of a pullback closure, with $S=R/pR$ and $\cl'=*$.
\end{remark}

\begin{proof}
Assume that $\alpha:R \to M$ is cl-phantom. By Lemma \ref{staysinjective}, $\id_S \otimes \alpha:S \to S \otimes_R M$ is injective. We claim that it is $\cl'$-\ph. Since $\alpha$ is cl-\ph, using Notation \ref{notation1}, 
\[\phi \in (\im(\Hom_R(P_0,R) \to \Hom_R(P_1,R)))^\cl_{\Hom_R(P_1,R)}.\]
this implies that 
\[1 \otimes \phi \in (\im(S \otimes_R \Hom_R(P_0,R) \to S \otimes_R \Hom_R(P_1,R)))^{\cl'}_{S \otimes_R \Hom_R(P_1,R)}.\]
We have $S \otimes_R \Hom_R(P_i,R) \cong \Hom_S(S \otimes P_i,S)$ for $i=0,1$. This isomorphism takes $1 \otimes \phi \to \id_S \otimes \phi$. Thus
\[\id_S \otimes \phi \in (\im(\Hom_S(S \otimes P_0,S) \to \Hom_S(S \otimes P_1,S)))^{\cl'}_{\Hom_S(S \otimes P_1,S)}.\]
By Lemma \ref{lemma5.6}, since $\id_S \otimes \alpha:S \to S \otimes_R M$ is injective, this implies that $\id_S \otimes \alpha$ is $\cl'$-\ph.
Since $\cl'$ satisfies the Algebra Axiom (Axiom \ref{algebraaxiom}), this implies that the map $(\id_S \otimes \alpha)':S \to \symt(S \otimes_R M)$ is also phantom. Using the notation from Lemma \ref{lemma5.6} applied to $(\id_S \otimes \alpha)'$, this implies that $\id_S \otimes \psi \in \Hom_S(S \otimes G_0,S)^{\cl'}_{\Hom_S(S \otimes G_1,S)}$. We have $\Hom_S(S \otimes G_i,S) \cong S \otimes \Hom_R(G_i,R)$, 
and the isomorphism takes $\id_S \otimes \psi \to 1 \otimes \psi$. By the definition of cl, this tells us that $\cl$ satisfies the Algebra Axiom.
\end{proof}

One special case is the case where $\cl'$ is the identity closure on $S$, which is Construction 3.1.1 from \cite{epstein}. The resulting closure on $R$ is the algebra closure $\cl_S$: 
\[N_M^\cl=\{x \in M : 1 \otimes x \in \im(S \otimes N \to S \otimes M)\}.\] 
We proved in Proposition \ref{algebraaxiomforalgebraclosures} that this closure operation satisfies the Algebra Axiom when $S$ is torsion-free over a domain $R$.

\subsection{Partial algebra modifications and phantom extensions}
\label{partialalgmodspreservephantomness}

In this section we show that if cl is a Dietz closure on $R$ satisfying the Algebra Axiom, a partial algebra modification of a cl-\phex\ of $R$ is also a cl-\phex\ of $R$. Partial algebra modifications are found in \cite{bigcmalgapps} as part of a construction of big \CM\ algebras in \charp.

\begin{lemma}
\label{partialalgmodsphantom}
Let $R$ be a local domain, cl a Dietz closure on $R$, $M$ a \fg\ $R$-module with $\alpha:R \to M$ a cl-\phex, and $x_1,\ldots,x_{k+1}$ part of a \sop\ for $R$. Suppose that $x_{k+1}m_{k+1}=\sum_{i=1}^k x_im_i$ for some $m_1,\ldots,m_{k+1} \in M$. Let 
\[M'=M[X_1,\ldots,X_k]_{\le 1}/RF,\]
where
\[F=m_{k+1}-\sum_{i=1}^k x_iX_i.\] By $M[X_1,\ldots,X_k]_{\le 1}$ we denote the module generated by all $m \in M$, $X_1,\ldots,X_k$, and $mX_i$ for $m \in M$ and $1 \le i \le k$. We have a map $\alpha':R \to M \to M'$, where the map $M \to M'$ takes $m \mapsto m$.
Then $\alpha':R \to M'$ is a cl-\phex.
\end{lemma}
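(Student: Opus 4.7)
The plan is to realize $M'$ as receiving an $R$-module map from a larger module that we already know is cl-\ph, and then transport cl-\ph ness back to $\alpha'$ via the \axioma. The larger module will be produced by combining a classical (Dietz) module modification of $M$ with one application of the Algebra Axiom.

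Explicitly, I first form the standard module modification
\[
M_1 = (M \oplus RY_1 \oplus \cdots \oplus RY_k)/R(m_{k+1} - \textstyle\sum_{\ell=1}^k x_\ell Y_\ell),
\]
with the composite $\alpha_1 : R \to M \hookrightarrow M_1$, $1 \mapsto e_1$. Since cl is a Dietz closure and $\alpha$ is cl-\ph, Dietz's theorem that module modifications preserve cl-\phex s \cite{dietz} gives that $\alpha_1$ is cl-\ph; applying Axiom \ref{algebraaxiom} then yields that $\alpha_2 : R \to \symt(M_1)$, $1 \mapsto e_1 \otimes e_1$, is cl-\ph\  as well.

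I then define an $R$-module map $f : M' \to \symt(M_1)$ by
\[
f(m) = m \otimes e_1 \quad (m \in M), \qquad f(X_\ell) = Y_\ell \otimes e_1, \qquad f(m X_\ell) = m \otimes Y_\ell.
\]
Well-definedness follows because the $M$-relations and the $MX_\ell$-relations survive tensoring with $e_1$ and with $Y_\ell$ respectively, and the defining relation $F = m_{k+1} - \sum x_\ell X_\ell = 0$ maps to $(m_{k+1} - \sum x_\ell Y_\ell) \otimes e_1 = 0$ in $\symt(M_1)$, using the symmetry $Y_\ell \otimes e_1 = e_1 \otimes Y_\ell$ together with the identity $m_{k+1} = \sum x_\ell Y_\ell$ holding in $M_1$. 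By construction $f \circ \alpha' = \alpha_2$, since both send $1 \mapsto e_1 \otimes e_1$.

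Finally, I check that $\alpha' : R \to M'$ is injective and then invoke the \axioma\ to conclude. Injectivity: if $re_1 = sF$ in $M[X_1,\ldots,X_k]_{\le 1}$, comparing $X_\ell$-components gives $sx_\ell = 0$ for each $\ell$; since $x_\ell$ is a nonzerodivisor on the domain $R$, $s = 0$, whence $re_1 = 0$ in $M$ and so $r = 0$. Now let $Q' = \coker(\alpha')$ and $Q_2 = \coker(\alpha_2)$; the map $f$ descends to $\bar f : Q' \to Q_2$, which induces $\bar f^* : \Ext_R^1(Q_2, R) \to \Ext_R^1(Q', R)$. By naturality of the Yoneda correspondence, $\bar f^*$ carries the extension class of $\alpha_2$ to that of $\alpha'$; since the former lies in $0^\cl$, the \axioma\ applied to $\bar f^*$ gives that the latter lies in $0^\cl$ as well, so $\alpha'$ is cl-\ph. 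I expect the main subtlety to be the well-definedness of $f$ against whichever precise convention is used for the generators of $M[X_1,\ldots,X_k]_{\le 1}$; once $f$ is in hand, the rest is a routine application of the axioms.
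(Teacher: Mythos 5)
Your argument has one decisive flaw: it invokes the Algebra Axiom (Axiom \ref{algebraaxiom}), but the Lemma's hypothesis is only that $\cl$ is a Dietz closure; the Algebra Axiom is not assumed. This is not an incidental omission in the statement. In the paper's organization, Lemma~\ref{partialalgmodsphantom} is precisely the ``degree $\le 1$'' base case for partial algebra modifications, proved from the Dietz axioms alone, and the Algebra Axiom is introduced only afterward, in the subsequent proposition, to bootstrap from the degree $\le 1$ case to general partial algebra modifications by passing through $\symt$ and symmetric algebras. Since your route goes through $\symt(M_1)$ from the outset, you need the Algebra Axiom to certify $\alpha_2 : R \to \symt(M_1)$ as cl-phantom, and so you establish a strictly weaker statement than the Lemma asserts.

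Apart from this, your scheme is internally sound: the map $f : M' \to \symt(M_1)$ with $m \mapsto m \otimes e_1$ and $mX_\ell \mapsto m \otimes Y_\ell$ is well-defined (the relation $F$ is killed because $m_{k+1} = \sum x_\ell Y_\ell$ holds in $M_1$ and $e_1 \otimes Y_\ell = Y_\ell \otimes e_1$), $f \circ \alpha' = \alpha_2$, the induced $\bar f^* : \Ext^1_R(Q_2, R) \to \Ext^1_R(Q', R)$ carries $\epsilon_2$ to $\epsilon'$ by the pullback description of extensions, and the \axioma\ then gives the conclusion. The paper instead works directly with the free presentation of $Q' = M'/\im(R)$ assembled from the presentations of $Q$ and $M$ (one block of $m$ columns for the $M$-part via $\nu$, one block of $m$ columns via $\nu_1$ for each $MX_i$, plus one extra column recording $F$), and shows by hand that the top row of $\nu_1'$ lies in the closure of the span of the remaining rows. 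The key inputs there are the Generalized Colon-Capturing Axiom via \cite[Lemma~3.14, Proposition~3.15]{dietz} and the device of using the $F$-column together with the $X_i$-columns to absorb the terms $r(\bm{y}E_1 \oplus 0)$ for $r \in (x_1,\ldots,x_k)$; no symmetric powers, and hence no Algebra Axiom, appear. To repair your proof you would need to avoid $\symt(M_1)$ entirely, which in effect forces you back onto this presentation-level argument.
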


\begin{proof}
We return to the notation of Notation \ref{notation1}. As Dietz does in \cite{dietz}, we build a resolution of $Q'$, where $Q'=M'/\im(R)$. Let $w_1,w_2,\ldots,w_n$ be a set of generators for $M$, not necessarily minimal, with $w_1=\alpha(1)$ and $w_n=m_{k+1}$. Then a presentation of $Q'$ is
\[
\begin{CD}
{R^{m(k+1)+1}} @>{\nu'}>> {R^{nk+n-1}} @>{\mu'}>> Q' @>>> {0,}
\end{CD}
\]
where $\nu'$ is given by the matrix
\[
\left(
\begin{array}{c c c c | c}
 & & & & 0 \\
\mbox{\huge $\nu$} & \mbox{\huge 0} & \ldots & \ldots & \vdots \\
& & & & 0 \\
& & & & 1 \\
\hline \\
& & & & x_1 \\
& & & & 0 \\
\mbox{\huge $0$} & \mbox{\huge $\nu_1$} & \mbox{\huge 0} & \ldots & \vdots \\
& & & & 0 \\
\vdots & \ddots & \vdots & & \vdots \\
& & & & x_k \\
& & & & 0 \\
\mbox{\huge 0} & \ldots & \mbox{\huge 0} & \mbox{\huge $\nu_1$} & \vdots \\
& & & & 0 \\
\end{array}
\right),
\]
The corresponding matrix $\nu_1'$ in a presentation of $M'$ is this matrix with the top row of $\nu_1$ followed by 0's added to the top. Note that there are $m$ columns for each of $1,X_1,\ldots,X_k$, and one additional column for the relation given by $F$.

To see that $M'$ is a cl-\phex\ of $R$, we first need to show that $\alpha'$ is injective. It is enough to show that $\beta:M \to M'$ is injective. Suppose that $u \in M$ maps to 0. Then $u=r(m_{k+1}-\sum_{i=1}^k x_iX_i)$. This forces $rx_i=0$ for all $i$. Since $R$ is a domain, $r=0$. But then $u=rm_{k+1}=0$.

To finish, it suffices to show that the top row of $\nu_1'$ is in the closure of the image of the other rows. Let $\bm{x},\bm{y},$ and $H$ be as in \cite[Notation 3.5]{dietz}, let $I=(x_1,\ldots,x_k)$, and let $E_{X^\alpha}$ denote the $n \times m(k+1)$ matrix that has an $n \times n$ identity matrix in the columns corresponding to $X^\alpha$ and 0's elsewhere. Then we need to show that $\bm{x}E_1 \oplus 0$ is contained in
\[
 \left((HE_1 \oplus 0)+R(\bm{y}E_1 \oplus 1)+\sum_{i=1}^k \left(R(\bm{x}E_{X_i} \oplus x_i)+(HE_{X_i} \oplus 0)+R(\bm{y}E_{X_i} \oplus 0)\right)\right)^\cl_{R^{m(k+1)+1}}.
\]
By the proof of \cite[Proposition 3.15]{dietz} and \cite[Lemma~3.1.b]{dietzclosureprops}, we have
\[\bm{x}E_1 \oplus 0 \in \left((HE_1 \oplus 0)+I(\bm{y}E_1 \oplus 0)\right)^\cl_{R^{m(k+1)+1}}.\]
So it suffices to show that $(HE_1 \oplus 0)+I(\bm{y}E_1 \oplus 0)$ is contained in the closure of 
\begin{equation}
\label{algmodmess}
(HE_1 \oplus 0)+R(\bm{y}E_1 \oplus 1)+\sum_{i=1}^k \left(R(\bm{x}E_{X_i} \oplus x_i)+(HE_{X_i} \oplus 0)+R(\bm{y}E_{X_i} \oplus 0)\right).
\end{equation}
It is clear that $(HE_1 \oplus 0)$ is in (\ref{algmodmess}). To see that $I(\bm{y}E_1 \oplus 0)$ is in the closure of (\ref{algmodmess}), let $r \in I$, say $r=-r_1x_1-\ldots-r_kx_k$. Then 
\[r(\bm{y}E_1 \oplus 0)=r(\bm{y}E_1 \oplus 1)+r_1(\bm{x}E_{X_1} \oplus x_1)+\ldots+r_k(\bm{x}E_{X_k} \oplus x_k)-r_1(\bm{x}E_{X_1} \oplus 0)-\ldots-r_k(\bm{x}E_{X_k} \oplus 0).\]
The only parts not obviously contained in the closure of (\ref{algmodmess}) are the $r_i(\bm{x}E_{X_i} \oplus 0)$. However, since $\alpha$ is phantom, by \cite[Lemma 3.14]{dietz} and \cite[Lemma~3.1.b]{dietzclosureprops} we have
\[\bm{x}E_{X_i} \oplus 0 \in \left((HE_{X_i} \oplus 0)+R(\bm{y}E_{X_i} \oplus 0)\right)^\cl_{R^{m(k+1)+1}},\]
which is sufficient.
\end{proof}

\begin{defn}\cite[Definition 5.2.8]{dietzthesis}
\label{limphantom}
Let cl be a closure operation on $R$ and $\alpha:R \to P$ be an injective map, where $P$ may not be \fg\ over $R$. We say that $R \to P$ is a \textit{\lcphex}\ if for all \fg\ $M \subseteq P$ such that $\alpha(R) \subseteq M$, $R \to M$ is a cl-\phex.
\end{defn}

\begin{prop}
\label{limitofphantomislimphantom}
Let cl be a closure operation on $R$ that satisfies the \axioma. Let $P$ be an $R$-module, not necessarily finitely generated, that is a direct limit of \fg\ $R$-modules 
\[R \to M_1 \to M_2 \to \ldots \to P\]
such that $R \hookrightarrow M_i$ is cl-\ph\ for all $i \ge 1$.
Then $P$ is a \lcphex\ of $R$.
\end{prop}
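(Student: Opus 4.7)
The plan is to take an arbitrary \fg\ submodule $M \subseteq P$ with $\alpha(R) \subseteq M$, show it is $R$-isomorphic to a \fg\ submodule $N$ of some stage $M_{j_0}$ containing $\alpha_{j_0}(R)$, and transfer cl-phantomness from $R \to M_{j_0}$ to $R \to N \cong M$ via the \axioma\ applied to an induced map on $\Ext^1$. The main technical point is that, because the transition maps $M_i \to M_{i+1}$ are not assumed injective, a Noetherian stabilization argument is needed to ensure $N$ matches $M$ exactly rather than only surjecting onto it. (Injectivity of $\alpha:R \to P$ itself follows from filtered colimits preserving kernels, so the definition of \lcphex\ applies.)

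First, I choose generators $\alpha(1) = u_0, u_1, \ldots, u_k$ of $M$. Since $P$ is the direct limit of the $M_i$, there is an index $i$ and lifts $v_s \in M_i$ for $s \ge 1$ with $v_s \mapsto u_s$ in $P$; here $\alpha_i(1)$ automatically lifts $u_0=\alpha(1)$ by compatibility. For $j \ge i$ let $v_s^{(j)}$ denote the image of $v_s$ in $M_j$, and let $S_j \subseteq R^{k+1}$ be the submodule of tuples $(c_0, \ldots, c_k)$ satisfying $c_0 \alpha_j(1) + \sum_{s=1}^k c_s v_s^{(j)} = 0$ in $M_j$. These $S_j$ form an ascending chain, and their union equals the submodule $S_\infty$ of relations among $u_0,\ldots,u_k$ in $M$ (equivalently, in $P$), by the direct limit description of $P$. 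Since $R^{k+1}$ is Noetherian, the chain stabilizes at some $j_0$, giving $S_{j_0} = S_\infty$. Setting $N := R\alpha_{j_0}(1) + \sum_s R v_s^{(j_0)} \subseteq M_{j_0}$, the module $N$ has the same presentation as $M$ with respect to its generators, yielding an $R$-module isomorphism $N \cong M$ compatible with the maps from $R$.

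Next I transfer cl-phantomness from $M_{j_0}$ to $N$. The inclusion $N \hookrightarrow M_{j_0}$ restricts to the identity on $\alpha_{j_0}(R)$, so it induces an injection $Q_N \hookrightarrow Q_{j_0}$ of cokernels; and since $N$ is precisely the preimage of $Q_N$ under the projection $M_{j_0} \twoheadrightarrow Q_{j_0}$, the short exact sequence $0 \to R \to N \to Q_N \to 0$ is the pullback of $0 \to R \to M_{j_0} \to Q_{j_0} \to 0$ along this injection. Consequently the Yoneda class $\epsilon_N \in \Ext^1_R(Q_N, R)$ is the image of $\epsilon_{j_0} \in \Ext^1_R(Q_{j_0}, R)$ under the functorially induced $R$-linear map between these \fg\ Ext modules. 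Because $R \to M_{j_0}$ is cl-\ph, $\epsilon_{j_0} \in 0^\cl_{\Ext^1(Q_{j_0}, R)}$, and the \axioma\ sends this into $0^\cl_{\Ext^1(Q_N, R)}$. Hence $R \to N$ is cl-\ph, and via the isomorphism $N \cong M$, so is $R \to M$, proving that $R \to P$ is a \lcphex.

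The main obstacle is the stabilization step. Because the transition maps $M_i \to M_{i+1}$ may kill elements, any naive choice of index yields only a surjection $N \twoheadrightarrow M$; then the induced map on $\Ext^1$ runs contravariantly in the opposite direction, and one cannot descend cl-phantomness from $N$ to $M$ via the \axioma. Noetherianity of $R$ applied to the ascending chain of relation modules in $R^{k+1}$ is exactly what forces termination at a finite stage $j_0$, producing the genuine isomorphism $N \cong M$ that makes the \axioma-based transfer routine.
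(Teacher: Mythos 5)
Your proof is correct, and it takes a more self-contained route than the paper's. The paper's argument is two lines: since $M$ is \fg, $\im(M) \subseteq \im(M_i \to P)$ for some $i$, and the transfer of cl-phantomness from $R \to M_i$ to $R \to M$ is then delegated entirely to a cited lemma from Dietz's thesis (Lemma 7.3.3.b). You supply that transfer yourself, and your two ingredients are sound: the Noetherian stabilization of the ascending chain of relation modules $S_j \subseteq R^{k+1}$ realizes $M$, compatibly with the maps from $R$, as an honest submodule $N$ of some $M_{j_0}$ (indeed the restriction of $M_{j_0} \to P$ to $N$ is your isomorphism $N \cong M$); and for a submodule $N \subseteq M_{j_0}$ containing $\alpha_{j_0}(R)$, the class $\epsilon_N$ is the image of $\epsilon_{j_0}$ under the contravariantly induced map $\Ext^1_R(Q_{j_0},R) \to \Ext^1_R(Q_N,R)$, so the \axioma\ (applied with the zero submodule, giving $f(0^\cl) \subseteq 0^\cl$) carries $\epsilon_{j_0} \in 0^\cl$ to $\epsilon_N \in 0^\cl$. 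Your diagnosis of why the stabilization is needed --- that passing to the image $\im(M_i \to P)$ is a quotient, for which the induced Ext map points the wrong way --- is exactly the subtlety that the paper's phrase ``$\im(M) \subseteq \im(M_i)$'' hides inside the citation. What your route buys is transparency and minimal hypotheses (only Functoriality is invoked, matching the proposition's hypothesis); what the paper's route buys is brevity. Two small points worth making explicit: injectivity of $R \to P$ uses that each $R \to M_i$ is injective (part of being a cl-\phex) together with exactness of filtered colimits; and $\Ext^1_R(Q_{j_0},R)$, $\Ext^1_R(Q_N,R)$ are \fg\ over the Noetherian ring $R$, so the closure operation and the \axioma\ do apply to them.
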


\begin{proof}
Let $M$ be a \fg\ $R$-module such that $R \hookrightarrow M \hookrightarrow P$. Since $M$ is \fg, there is some $i$ such that $\im(M) \subseteq \im(M_i)$. By \cite[Lemma 7.3.3.b]{dietzthesis}, since $R \to M_i$ is cl-phantom, so is $R \to M$. 
\end{proof}

\begin{prop}
Let $R$ be a local domain and let cl be a Dietz closure that satisfies the Algebra Axiom (Axiom \ref{algebraaxiom}). Suppose that $M$ is a \fg\ $R$-module, there is a cl-\ph\ map $\alpha:R \to M$ sending $1 \mapsto e$, and $M'$ is a partial algebra modification of $M$. Then the map $\alpha':R \to M'$ is cl-\ph.

In particular, if $M$ is an $R$-algebra and $e=1$ in $M$, then this result holds.
\end{prop}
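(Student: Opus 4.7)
The plan is to realize any partial algebra modification $M'$ of $M$ as a finite composition of two kinds of operations on $R$-modules, each already known to preserve cl-phantomness: (i) partial module modifications as in Lemma~\ref{partialalgmodspreservephantomness}, and (ii) finite symmetric-power quotients of the form $N \mapsto \symm^{\le 2^j}(N)/(1-e)\symm^{\le 2^j-1}(N)$, which preserve cl-phantomness by iterating the Algebra Axiom exactly as in Remark~\ref{actualsymmetricpowers} and the proof of Theorem~\ref{bigcmalgebras}.

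First I would treat the ``in particular'' case, in which $M$ is already an $R$-algebra with $e=1$ and $M' = M[X_1,\ldots,X_k]/(F)$ for $F = m_{k+1} - \sum_{i=1}^k x_i X_i$, with $x_1,\ldots,x_{k+1}$ part of a \sop. I would build $M'$ from $M$ in three stages. \emph{Stage~1:} apply a partial module modification to obtain $N_1 = M[X_1,\ldots,X_k]_{\le 1}/RF$; this is exactly the construction of Lemma~\ref{partialalgmodspreservephantomness}, so $R \to N_1$ is cl-\ph. \emph{Stage~2:} pass to $\widetilde{N}_1 = \symm^{\le 2^j}(N_1)/(1-e)\symm^{\le 2^j-1}(N_1)$ for $j$ large enough that $\widetilde{N}_1$ contains representatives of every monomial $X^\alpha$ and every product $m \cdot X^\alpha$ needed to generate $M'$; iterated Algebra Axiom yields $R \to \widetilde{N}_1$ cl-\ph. \emph{Stage~3:} impose the remaining algebra relations — namely the higher-degree consequences $X^\beta F = 0$ together with the identifications of symmetric tensors $m \otimes m'$ with the algebra product $mm'$ coming from $M$ — by a finite sequence of further partial module modifications, each governed by Lemma~\ref{partialalgmodspreservephantomness}. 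Finite generation of $M'$ over $R$ ensures the process terminates, at which point $M'$ has been produced and $R \to M'$ is cl-\ph.

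For the general case, where $M$ need not carry an algebra structure but the partial algebra modification still involves relations built using symmetric products of elements of $M$, I would first replace $M$ by a sufficiently large symmetric-power quotient $\widetilde{M} = \symm^{\le 2^j}(M)/(1-e)\symm^{\le 2^j-1}(M)$ so that all products appearing in the defining relation of $M'$ make sense inside an algebra-like structure on $\widetilde{M}$. The map $R \to \widetilde{M}$ is cl-\ph\ by the iterated Algebra Axiom, after which the ``in particular'' argument applies to $\widetilde{M}$ and produces $M'$ as a further finite composition of the two operations above.

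The main obstacle will be the combinatorial bookkeeping in Stage~3: writing down an explicit finite sequence of partial module modifications whose composite quotient is precisely $M'$, and checking at each step that the relation being imposed has the colon-capturing form $x_{k+1}\,\tilde m_{k+1} = \sum_i x_i\,\tilde m_i$ required by Lemma~\ref{partialalgmodspreservephantomness}, with $\tilde m_i$ sitting in the current finite-stage module. Once that decomposition is verified, cl-phantomness is preserved at every intermediate stage either by Lemma~\ref{partialalgmodspreservephantomness} (for stages of type~(i)) or by the Algebra Axiom together with Remark~\ref{actualsymmetricpowers} (for stages of type~(ii)), and composition gives the desired conclusion.
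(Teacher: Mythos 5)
Your Stage~1 matches the paper exactly: one application of Lemma~\ref{partialalgmodsphantom} gives that $R \to M_1 = M[X_1,\ldots,X_k]_{\le 1}/RF$ is cl-\ph. The gap is in Stage~3. Lemma~\ref{partialalgmodsphantom} only lets you kill a single relation of the specific colon-capturing shape $F = m_{k+1}-\sum_i x_i X_i$ arising from $x_{k+1}m_{k+1}=\sum_i x_i m_i$ with the $x_i$ part of a \sop. The ``remaining algebra relations'' you want to impose --- the higher-degree consequences $X^\beta F$ and the identification of symmetric tensors with products --- are not of that form, so the lemma does not apply to them, and nothing in the paper (or in general) says that passing to an arbitrary quotient preserves cl-phantomness; a quotient can easily destroy injectivity of $\alpha'$ or push the image of $1$ into $mM'$. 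Calling this ``combinatorial bookkeeping'' understates the problem: there is no mechanism available to carry out those steps.

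The paper's proof sidesteps the quotient construction entirely by going in the embedding direction. Since $F=0$ already holds in $M_1$, all the relations $X^\beta F$ automatically vanish in $\symm(M_1)/(1-e)\symm(M_1)$, so there is a natural map $M' \to \symm(M_1)/(1-e)\symm(M_1)$ compatible with the maps from $R$. Iterating the Algebra Axiom shows each $\symm^{\le 2^j}(M_1)/(1-e)\symm^{\le 2^j-1}(M_1)$ is a cl-\phex\ of $R$, hence by Proposition~\ref{limitofphantomislimphantom} the direct limit $\symm(M_1)/(1-e)\symm(M_1)$ is a \lcphex\ of $R$; and then the defining property of \lcph\ (Definition~\ref{limphantom}, applied to the \fg\ image of $M'$, which contains the image of $R$) gives that $R \to M'$ is cl-\ph. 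So the missing idea is to realize $M'$ as a \fg\ submodule of a lim cl-\ph\ extension rather than as a quotient of intermediate modules; once you make that switch, your Stages~1 and~2 are exactly the right ingredients and Stage~3 disappears.
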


\begin{proof}
By Lemma \ref{partialalgmodsphantom}, if $M'=M[X_1,\ldots,X_k]_{\le N}/FM[X_1,\ldots,X_k]_{\le N-1}$ with $N \le 1$, then the result is immediate. If not, let $M_1=M[X_1,\ldots,X_k]_{\le 1}/FR$. By Lemma \ref{partialalgmodsphantom}, $R \to M_1$ is cl-\ph. Since cl satisfies the Algebra Axiom, $R \to \symt(M_1)$ is also cl-\ph, and so $R \to \symm(M_1)/(1-e)\symm(M_1)$ is lim cl-\ph.
 So we have
\[R \to M' \to \symm(M_1)/(1-e)\symm(M_1),\]
where the map $R \to \symm(M_1)/(1-e)\symm(M_1)$ is lim cl-\ph. By Proposition \ref{limitofphantomislimphantom}, $R \to M'$ is cl-\ph.
\end{proof} 

In consequence, our construction of big \CM\ algebras using cl-\phex s could have used partial algebra modifications rather than module modifications and symmetric algebras.

\subsection{Smallest big \CM\ algebra closure}

In this section we show that the closure we get from a big \CM\ algebra constructed as in the proof of Theorem \ref{bigcmalgebras} is the same as the closure we get from a big \CM\ algebra constructed using algebra modifications \cite{bigcmalgapps}, and that both are the smallest big \CM\ algebra closure on the ring.

By Proposition \ref{intersectionalgebraaxiom}, the Algebra Axiom is intersection stable as defined in \cite[Section~4]{dietzclosureprops}.

\begin{corollary}
If $R$ has a Dietz closure that satisfies the Algebra Axiom, then it has a smallest such closure.
\end{corollary}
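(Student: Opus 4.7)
The plan is to form the intersection of all Dietz closures on $R$ satisfying the Algebra Axiom and verify that this intersection inherits all the required properties. Let $\mathcal{C}$ denote the collection of all Dietz closures on $R$ that satisfy the Algebra Axiom; by hypothesis $\mathcal{C}$ is nonempty. Define a putative closure $\cl$ on $R$ by
\[ N_M^{\cl} \;=\; \bigcap_{\cl' \in \mathcal{C}} N_M^{\cl'} \]
for every pair $N \subseteq M$ of finitely generated $R$-modules.

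First I would check that $\cl$ is indeed a closure operation: Extension and Order-Preservation pass trivially to intersections, and Idempotence follows because $N_M^{\cl} \subseteq N_M^{\cl'}$ gives $(N_M^{\cl})_M^{\cl'} \subseteq (N_M^{\cl'})_M^{\cl'} = N_M^{\cl'}$ for each $\cl' \in \mathcal{C}$, so $(N_M^{\cl})_M^{\cl} \subseteq \bigcap_{\cl'} N_M^{\cl'} = N_M^{\cl}$. Next I would verify each of the four Dietz axioms in turn. Functoriality, Semi-residuality, and Faithfulness are each straightforward to check directly on the intersection (Functoriality: $f(N_M^\cl) \subseteq f(N_M^{\cl'}) \subseteq f(N)_W^{\cl'}$ for every $\cl'$; Faithfulness: $\mathfrak{m}$ is closed under every $\cl' \in \mathcal{C}$, hence under the intersection; Semi-residuality: if $N_M^{\cl} = N$, then for each $\cl' \in \mathcal{C}$ one has $N \subseteq N_M^{\cl'} \subseteq \bigcap N_M^{\cl'} = N$, so $N_M^{\cl'} = N$, giving $0_{M/N}^{\cl'} = 0$ for every $\cl'$, hence $0_{M/N}^{\cl} = 0$). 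Generalized Colon-Capturing follows similarly from the observation that the hypothesis $(Rv)_M^{\cl} \cap \ker f$ is contained in $(Rv)_M^{\cl'} \cap \ker f$, which lies in $(Jv)_M^{\cl'}$ for every $\cl' \in \mathcal{C}$. Each of these intersection-stability statements is already recorded in \cite[Section~4]{dietzclosureprops}, so the verification amounts to citing that reference.

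Finally, the Algebra Axiom holds for $\cl$ by Proposition \ref{intersectionalgebraaxiom} applied to the family $\mathcal{C}$. Since $N_M^{\cl} \subseteq N_M^{\cl'}$ for every $\cl' \in \mathcal{C}$ by construction, $\cl$ is contained in every Dietz closure on $R$ that satisfies the Algebra Axiom, and therefore $\cl$ is the smallest such closure.

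The main obstacle is no real obstacle at all, since every required stability property has already been established earlier in the paper or in the cited companion paper \cite{dietzclosureprops}; the proof really is just an assembly of the pieces, with the essential content being the intersection-stability of the Algebra Axiom (Proposition \ref{intersectionalgebraaxiom}). The one conceptual point worth flagging is that the intersection is a priori only a closure operation; having a nonempty family $\mathcal{C}$ is what prevents $N_M^{\cl}$ from collapsing to something trivial, and this is exactly what the hypothesis of the corollary supplies.
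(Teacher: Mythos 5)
Your proposal takes exactly the paper's route: the paper's proof is literally that the corollary is immediate from Proposition \ref{intersectionalgebraaxiom} together with the intersection-stability of the Dietz axioms recorded in \cite[Section~4]{dietzclosureprops}, and you assemble the same pieces. One correction to your inline verification of Semi-residuality, though: from $N_M^{\cl}=N$ you cannot conclude $N_M^{\cl'}=N$ for each $\cl'\in\mathcal{C}$ --- your inclusion $N_M^{\cl'}\subseteq\bigcap_{\cl''}N_M^{\cl''}$ is backwards, and an individual $N_M^{\cl'}$ may strictly contain $N$. The correct argument combines Functoriality and Semi-residuality of each $\cl'$: with $N'=N_M^{\cl'}$ (which is $\cl'$-closed by idempotence), Semi-residuality gives $0_{M/N'}^{\cl'}=0$, Functoriality applied to $M/N\to M/N'$ gives $0_{M/N}^{\cl'}\subseteq N'/N$, and intersecting over $\cl'$ yields $0_{M/N}^{\cl}\subseteq\bigl(\bigcap_{\cl'}N_M^{\cl'}\bigr)/N=0$. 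Since you defer to \cite[Section~4]{dietzclosureprops} for these stability facts anyway (as the paper does), the proof stands, but the parenthetical as written is false and should be repaired or dropped.
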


This is immediate from Proposition \ref{intersectionalgebraaxiom}. We do not know whether this closure is a big \CM\ algebra closure, but we do have a smallest big \CM\ algebra closure for $R$.

\begin{prop}
If $R$ has a big \CM\ algebra (equivalently, a Dietz closure that satisfies the Algebra Axiom), then it has a smallest big \CM\ algebra closure. This closure is equal to the closure $\cl_B$ where $B$ is constructed as in Theorem \ref{bigcmalgebras}. It is also equal to the closure $\cl_B$ where $B$ is constructed using algebra modifications as in \cite{bigcmalgapps}.
  \end{prop}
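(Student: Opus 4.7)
The plan is to show that the algebra $B$ constructed in Theorem \ref{bigcmalgebras} admits an $R$-algebra homomorphism into every big \CM\ algebra $C$ of $R$. Once such a map exists, the persistence argument at the start of Section \ref{algebraaxiomsection} (applied trivially with $R \to S = R$ and algebras $B \to C$) forces $\cl_B \subseteq \cl_C$ for every big \CM\ algebra $C$. Since $B$ is itself a big \CM\ algebra, $\cl_B$ is then a big \CM\ algebra closure contained in every other such closure, so it is the smallest.

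The main step is to build $B \to C$ by induction along the stages defining $B$, compatibly with the structure maps from $R$. The base case is the given map $R \to C$. Suppose at some intermediate stage we have an $R$-linear $\psi : M \to C$ with $\psi(e) = 1_C$, where $e = \alpha(1)$. If $M'$ is obtained from $M$ by a module modification killing a bad relation $z_{k+1} m_{k+1} = \sum_{i=1}^{k} z_i m_i$, with $z_1, \ldots, z_{k+1}$ part of a system of parameters of $R$, then $z_{k+1}\psi(m_{k+1}) \in (z_1, \ldots, z_k)C$; because $z_1, \ldots, z_{k+1}$ is a regular sequence on the big \CM\ algebra $C$, we may write $\psi(m_{k+1}) = \sum z_i c_i$ for some $c_i \in C$, and setting $X_i \mapsto c_i$ extends $\psi$ to $M'$. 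If instead the next stage replaces $M$ by $\symm(M)/(1-e)\symm(M)$, the universal property of the symmetric algebra, together with $\psi(e) = 1_C$, yields an $R$-algebra extension to the quotient (essentially uniquely). Taking direct limits gives the desired map $B \to C$.

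For the final claim about $B'$, the big \CM\ algebra built from algebra modifications as in \cite{bigcmalgapps}, the identical inductive strategy applies, with the symmetric-algebra stage replaced by an algebra-modification stage that is handled by the same colon-capturing argument in $C$. This yields an $R$-algebra map $B' \to C$ for every big \CM\ algebra $C$, so $\cl_{B'}$ is equally a smallest big \CM\ algebra closure. Taking $C = B'$ and $C = B$ respectively gives $\cl_B \subseteq \cl_{B'}$ and $\cl_{B'} \subseteq \cl_B$, so the two closures coincide.

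The main delicacy is that the inductive extensions of $\psi$ are not canonical: the elements $c_i$ provided by colon-capturing involve non-unique choices, so the maps $B \to C$ and $B' \to C$ are not uniquely determined. This is not an obstacle for the conclusion, since only the \emph{existence} of some compatible $R$-algebra map is required, and persistence then delivers the closure containment. It is exactly the colon-capturing step that uses the hypothesis that $C$ is a big \CM\ algebra, which is why the argument is specific to the algebra setting rather than the module setting.
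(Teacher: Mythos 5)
Your proposal is correct and follows essentially the same route as the paper: both build a map from $B$ (resp.\ $B'$) to an arbitrary big Cohen--Macaulay algebra $C$ by inductively extending $R \to C$ through the module-modification stages (using that a partial system of parameters is a regular sequence on $C$) and the symmetric-algebra or algebra-modification stages (using that $C$ is an $R$-algebra), and then deduce $\cl_B \subseteq \cl_C$. The only cosmetic difference is that you conclude via persistence from a single $1$-preserving map $B \to C$, whereas the paper produces, for each element of $C$, a module map $B \to C$ hitting it and invokes the comparison criterion of \cite[Proposition~3.6]{dietzclosureprops}.
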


\begin{proof}
For the second statement, let $B$ be a big \CM\ algebra constructed by the method of Theorem \ref{bigcmalgebras}, and $B'$ another big \CM\ algebra for $R$. We show that for any element of $B'$, there is a map $B \to B'$ whose image contains that element. This is enough by \cite[Proposition~3.6]{dietzclosureprops}. Let $R \to B'$ be any map of $R$-modules. We construct a map $B \to B'$ that extends this map. If at any stage, we have a map $M \to B'$, and we take a module modification of $M$, the map extends as in \cite[Proposition~4.14]{dietzclosureprops}. If we have a map $M \to B'$, and the map $R \to M$ takes $1 \mapsto u$, it extends to a map $\symm(M)/(1-u)\symm(M) \to B'$ as $B'$ is an $R$-algebra. Hence starting with the map $R \to B'$, we can construct a map $B \to B'$ with the necessary properties. This implies both the first and second statements of the Proposition.

For the last statement, it suffices to show that if $B$ is a big \CM\ algebra constructed with algebra modifications, $\cl_B$ is also the smallest big \CM\ algebra closure. Let $B'$ be any big \CM\ algebra. We show that for any element of $B'$, there is a map $B \to B'$ whose image contains that element. We start with any $R$-module map $R \to B'$. Suppose that we have a map $S \to B'$, and that we take an algebra modification
\[S'=S[X_1,\ldots,X_k]/FS[X_1,\ldots,X_k],\]
where $s-x_1X_1-\ldots-x_kX_k$, $x_1,\ldots,x_{k+1}$ are part of a \sop\ for $R$, and $sx_{k+1}=s_1x_1+\ldots+s_kx_k$ is a bad relation in $S$. Since $B'$ is a big \CM\ algebra for $R$, $s \in (x_1,\ldots,x_k)B'$, say
\[s=x_1b_1+\ldots+x_kb_k.\]
Then we can extend the map $S \to B'$ to $S'$ by sending $X_i \mapsto b_i$. This gives us a well-defined map $S' \to B'$. Hence we have a map $B \to B'$ whose image includes the image of the original map $R \to B'$.
\end{proof}

\section{Dietz closures satisfying the Algebra Axiom in \charp}
\label{dietzplusalgincharp}

\begin{prop}
\label{dietzalginbigcmalg}
Let $R$ be a local domain and cl be a Dietz closure on $R$ that satisfies the Algebra Axiom (Axiom \ref{algebraaxiom}). Then cl is contained in a big \CM\ algebra closure $\cl_B$.
\end{prop}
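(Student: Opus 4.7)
My plan is to produce, for each $u \in N_M^\cl$, a big \CM\ algebra $B_u$ with $u \in N_M^{\cl_{B_u}}$, and then to assemble a single $B$ by a transfinite iteration; the main obstacle is converting the statement $u \in 0_M^\cl$ into an honest injective cl-\phex\ of $R$, at which point Theorem~\ref{bigcmalgebras} takes over. Using Functoriality on the quotient $M\twoheadrightarrow M/N$, the hypothesis $u \in N_M^\cl$ forces $\bar u \in 0_{M/N}^\cl$, and the analogous reduction gives $u \in N_M^{\cl_B}$ from $\bar u \in 0_{M/N}^{\cl_B}$, so it suffices to treat $u \in 0_M^\cl$. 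Fix a free presentation $R^m \xrightarrow{\nu} R^n \to M \to 0$ with columns $k_1,\ldots,k_m \in R^n$ of $\nu$ spanning $K := \im\nu$, and lift $u$ to $\tilde u = (r_1,\ldots,r_n)^{\tr} \in R^n$; then $u \in 0_M^\cl$ translates to $\tilde u \in K^\cl_{R^n}$.

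Next, I would define
\[
M^{\sharp} = \bigl(Re_0 \oplus RX_1 \oplus\cdots\oplus RX_m\bigr)\big/\bigl\langle r_i e_0 - {\textstyle\sum_j} k_{ij} X_j\ :\ 1\le i\le n\bigr\rangle,
\]
and set $\alpha : R \to M^{\sharp}$, $1 \mapsto e_0$. The claim is that $\alpha$ is an injective cl-\phex. For \emph{injectivity}, $r e_0$ lies in the relation submodule iff there exist $a_1,\ldots,a_n \in R$ with $\sum_i a_i k_{ij} = 0$ for every $j$ and $\sum_i a_i r_i = r$; the first condition identifies $(a_i)$ with a map $a \in \Hom_R(M,R) \cong \ker(\nu^{\tr})$, and the second reads $r = a(u)$. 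By Functoriality, $a(u) \in a(0_M^\cl) \subseteq 0_R^\cl = 0$, so $r = 0$. For \emph{cl-\ph-ness}, the presentation matrix of $M^{\sharp}$ in the basis $(e_0, X_1, \ldots, X_m)$ has top row $(r_1,\ldots,r_n)$ and $X_j$-row $(-k_{1j},\ldots,-k_{nj})$, whose transposes span $K \subseteq R^n$, so Lemma~\ref{lemma2.10} gives the desired equivalence with $\tilde u \in K^\cl_{R^n}$.

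Finally, feeding $\alpha$ into the construction of Theorem~\ref{bigcmalgebras} in place of $\id: R \to R$, the iterated module modifications and symmetric-algebra steps of the form $\symm(-)/(1-e_0)\symm(-)$ preserve cl-\ph-ness by the Algebra Axiom combined with Lemma~\ref{partialalgmodsphantom} and Proposition~\ref{limitofphantomislimphantom}, so the direct limit is a big \CM\ algebra $B_u$ with a factorization $R \to M^{\sharp} \to B_u$. Writing $b_j \in B_u$ for the image of $X_j$, the defining relations of $M^{\sharp}$ become $r_i = \sum_j k_{ij} b_j$ in $B_u$, i.e.\ $\tilde u = \sum_j b_j k_j \in KB_u$, giving $u \in 0_M^{\cl_{B_u}}$. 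To promote this to a single $B$ containing all of $\cl$, I would enumerate the set of triples $(N\subseteq M,u)$ with $u \in N_M^\cl$ and iterate the construction transfinitely, taking directed colimits at limit stages; these preserve the big \CM\ property since a common system of parameters of $R$ remains regular and $1\notin mB$ persists through any directed colimit of big \CM\ algebras.
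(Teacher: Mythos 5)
Your proposal is in the same spirit as the paper's proof (which delegates most of the work to ``the second type of module modifications'' of the referenced earlier paper and the proof of its Theorem~5.1), and your explicit construction of $M^{\sharp}$, together with the verification via Lemma~\ref{lemma2.10} that $\tilde u \in K^{\cl}_{R^n}$ is exactly the condition that $R \to M^{\sharp}$ is cl-\ph, is a correct and nicely concrete realization of what that citation supplies. Your reduction from $u \in N_M^\cl$ to $\bar u \in 0_{M/N}^\cl$ is also fine: Functoriality gives the forward implication for $\cl$, and for the converse with $\cl_B$ one uses that $B\otimes(M/N) \cong (B\otimes M)/\im(B\otimes N)$ so that $\bar u \in 0_{M/N}^{\cl_B}$ is literally the same statement as $u \in N_M^{\cl_B}$.

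The gap is in the last step. You produce, for a single triple $(N,M,u)$, a big \CM\ algebra $B_u$ with $u \in N_M^{\cl_{B_u}}$; but then you say ``iterate the construction transfinitely'' without explaining what the successor step actually is. The construction you gave maps $R \to B_u$, not $B_\alpha \to B_{\alpha+1}$, so there is no chain to take colimits of. To make the iteration work you must force the next closure relation on top of the module you currently have, i.e.\ given an intermediate $S$ in the chain with $R\to S$ (lim-)cl-\ph\ and a new $u' \in 0_{M'}^\cl$, you need to form
\[
S' = \bigl(S \oplus SX_1 \oplus\cdots\oplus SX_m\bigr)\big/\bigl\langle r'_i\cdot 1_S - {\textstyle\sum_j} k'_{ij}X_j : 1\le i\le n\bigr\rangle
\]
and prove that $R\to S'$ is still cl-\ph. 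Your $M^{\sharp}$ argument is precisely the case $S=R$; the general case requires a matrix computation along the lines of Lemma~\ref{partialalgmodsphantom} (block together a presentation of $S$ with the new columns and rows and check the Lemma~\ref{lemma2.10} criterion using the Dietz axioms). That is exactly the content the paper imports via ``the second type of module modifications'' and Theorem~5.1 of the cited work. Without this lemma you cannot stack the forcing steps, and the alternative of trying to combine the separate $B_u$'s (e.g.\ via tensor products or the directedness of big \CM\ algebras) is not available over an arbitrary local domain. With that lemma in hand, the rest of your argument --- the alternation with module modifications and $\symm(-)/(1-e)\symm(-)$ steps, and the observation that $1\notin mB$ and regularity of a fixed \sop\ pass to directed colimits --- is correct.
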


\begin{proof}
This proof follows the proof of \cite[Theorem 5.1]{dietzclosureprops}. We can construct $B$ by first constructing a big \CM\ algebra as in Section \ref{algebraaxiomsection}. Then we can use the second type of module modifications  described in \cite[Definition~4.18]{dietzclosureprops}. At every stage, we have preserved $1 \not\in \im(m)$. Repeating these two steps infinitely many times, we get a big \CM\ algebra $B$ such that $\cl \subseteq \cl_B$.
\end{proof}

\begin{defn}
\label{bigcmalgclosure}
Let $R$ be a complete local domain of \charp\ and let $\cal{B}$ be the family of big \CM\ algebras of a ring $R$. By a result of Dietz \cite{seeds}, this is a directed family of algebras, and so we can define a closure operation $\cl_{\cal{B}}$ as in Definition \ref{algebrafamilyclosure}. We call this the \textit{big \CM\ algebra closure.}
\end{defn}

\begin{thm}
\label{intightclosure}
Let $R$ be a complete local domain (or analytically irreducible excellent local domain) of \charp, and cl a Dietz closure on $R$ that satisfies the Algebra Axiom. Then cl is contained in tight closure (*).
\end{thm}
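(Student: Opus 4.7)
The plan is to use Proposition \ref{dietzalginbigcmalg} to reduce the problem to comparing a big \CM\ algebra closure with tight closure, and then to exploit Frobenius on the big \CM\ algebra together with its solidity over the complete local domain.

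By Proposition \ref{dietzalginbigcmalg}, there is a big \CM\ $R$-algebra $B$ with $\cl \subseteq \cl_B$, so it suffices to prove $\cl_B \subseteq *$. In the analytically irreducible excellent case, I would first pass to the completion $\hat{R}$ (again a local domain) and verify compatibility of both closures with this passage. The second ingredient is solidity: for a complete local Noetherian domain $R$ of dimension $d$ and a big \CM\ $R$-algebra $B$, the map $H^d_m(R) \to H^d_m(B)$ sends the Cech class $[1/(x_1 \cdots x_d)]$ for a \sop\ to a nonzero element, because that system is regular on $B$ and $mB \ne B$. By local duality over the complete local domain this yields a nonzero $R$-linear map $\phi_0 : B \to R$. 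Since $B$ is commutative, for any $b \in B$ with $\phi_0(b) \ne 0$ the rule $\phi(x) := \phi_0(bx)$ is $R$-linear with $c := \phi(1) = \phi_0(b) \in R^\circ$.

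Given $u \in N_M^{\cl_B}$, write $1 \otimes u = \sum_i b_i \otimes n_i$ in $B \otimes_R M$ with $n_i \in N$ and $b_i \in B$. In characteristic $p$, the assignment $b \otimes m \mapsto b^{p^e} \otimes m^{(e)}$ defines a well-defined additive map
\[\Psi^e : B \otimes_R M \to B \otimes_R F^e(M)\]
for each $e \ge 0$: the relation $br \otimes m = b \otimes rm$ is respected because both sides land on $b^{p^e} r^{p^e} \otimes m^{(e)}$, using the semilinearity $(rm)^{(e)} = r^{p^e} \cdot m^{(e)}$ in $F^e(M)$. Applying $\Psi^e$ to both sides of the expression for $1 \otimes u$ gives
\[1 \otimes u^{(e)} = \sum_i b_i^{p^e} \otimes n_i^{(e)} \quad \text{in } B \otimes_R F^e(M),\]
and composing with the $R$-linear map $\phi \otimes \id_{F^e(M)}: B \otimes_R F^e(M) \to F^e(M)$ produces
\[c \cdot u^{(e)} = \sum_i \phi(b_i^{p^e}) \cdot n_i^{(e)} \in \im\!\left(F^e(N) \to F^e(M)\right)\]
in $F^e(M)$ for every $e \ge 0$. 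Since $c \in R^\circ$, this is exactly the condition $u \in N_M^*$.

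The main obstacle will be the solidity step: the claim that every big \CM\ algebra over a complete local domain is solid should be attributed to Hochster's solid closure theory, and the upgrade to a $\phi$ with $\phi(1) \in R^\circ$ via the commutativity trick is standard but deserves a clean citation. The Frobenius transfer through $\Psi^e$ is the conceptual core of the proof and is only available in characteristic $p$, which is precisely why the theorem requires entirely different techniques in mixed characteristic.
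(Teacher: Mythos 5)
Your proposal is correct, and its first step is identical to the paper's: apply Proposition \ref{dietzalginbigcmalg} to reduce to showing $\cl_B \subseteq *$ for a single big \CM\ algebra $B$. Where you diverge is in how that containment is established. The paper simply cites Hochster's theorem that in \charp\ tight closure coincides with the big \CM\ algebra closure $\cl_{\cal{B}}$ (\emph{Solid closure}, Theorem 11.1), whereas you reprove the one direction actually needed: $B$ is solid over the complete local domain $R$ because $H^d_m(B) \ne 0$ (this is Hochster's criterion, \emph{Solid closure}, Corollary 2.4), the map can be normalized so that $\phi(1) = c \in R^\circ$, and the Frobenius twist $\Psi^e(b \otimes m) = b^{p^e} \otimes m^{(e)}$ --- whose well-definedness you correctly verify using $(b+b')^{p^e} = b^{p^e} + b'^{p^e}$ in characteristic $p$ and the $p^e$-semilinearity of $m \mapsto m^{(e)}$ --- converts $1 \otimes u \in \im(B \otimes_R N \to B \otimes_R M)$ into $c\,u^{(e)} \in N^{[p^e]}_M$ for all $e$. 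This is essentially Hochster's own argument that closures by solid algebras lie inside tight closure, so mathematically you have unpacked the citation rather than found a new route; the gain is a self-contained proof needing only one big \CM\ algebra and only the easy containment, the cost being that you must import the solidity criterion and local duality. Two small points: the equality $* = \cl_{\cal{B}}$ that the paper invokes is a genuinely stronger statement than what either proof needs, and your treatment of the analytically irreducible excellent case (pass to $\hat{R}$ and check compatibility) is left as a sketch --- but the paper's proof is no more explicit there, and the reduction is standard since $\hat R$ is then a complete local domain and tight closure and $\cl_B$ both behave well under this faithfully flat base change.
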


\begin{proof}
In \charp, tight closure is equal to the closure $\cl_{\cal{B}}$ given in Definition \ref{bigcmalgclosure} \cite[Theorem 11.1]{solidclosure}. Since by Proposition \ref{dietzalginbigcmalg}, cl is contained in $\cl_B$ for some big \CM\ algebra $B$, $\cl \subseteq *$.
\end{proof}

Note that in equal characteristic 0, it is not known whether $\cl_{\cal{B}}$ is a closure operation, though it generates one as in Definition \ref{familygeneratedclosure}. It is known that $*\text{EQ}$, big equational tight closure, is contained in $\cl_{\cal{B}}$ but it is not known whether they are equal, so we cannot currently prove Theorem \ref{intightclosure} in this case.

If $R$ has equal characteristic 0 and we restrict $\cal{B}$ to be the set of big Cohen-Macaulay algebras that are ultrarings in the sense of \cite{schoutens}, then by the main result of \cite{dietzrg}, $\cl_{\cal{B}}$ is a closure operation. However, we don't know whether this closure operation agrees with any version of tight closure either.

The following Corollaries are immediate from Theorem \ref{intightclosure}:

\begin{corollary}
Let $R$ be a complete local domain of \charp. Then tight closure is the largest Dietz closure satisfying the Algebra Axiom on $R$.
\end{corollary}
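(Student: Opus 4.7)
The plan is to observe that the corollary combines two pieces of information from the paper: a containment result and a membership result. First, Theorem~\ref{intightclosure} already tells us that any Dietz closure $\cl$ on $R$ satisfying the Algebra Axiom satisfies $\cl \subseteq *$. So the only additional content needed for the corollary is to verify that tight closure itself is actually a member of the class of Dietz closures satisfying the Algebra Axiom: once that is checked, $*$ is both an upper bound and an element of the poset, hence the largest element.

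For the two verifications: tight closure on a complete local domain of characteristic $p>0$ is a Dietz closure by the original result of Dietz (cited in the introduction as \cite{dietz}), and tight closure satisfies the Algebra Axiom by the earlier proposition in this section showing that $*$-phantomness of $\alpha\colon R \to M$ implies $*$-phantomness of $\alpha'\colon R \to \symt(M)$. So I would simply cite these two facts to conclude $* $ belongs to the class, then invoke Theorem~\ref{intightclosure} to conclude maximality.

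There is essentially no obstacle here; the proof is a one-line assembly. The only thing worth noting carefully is that the hypothesis of the corollary (complete local domain of characteristic $p$) matches the hypothesis under which tight closure is known to be a Dietz closure, so no further work is needed to transfer Dietz's result. A clean write-up would read roughly: ``By \cite{dietz}, $*$ is a Dietz closure on $R$, and by the proposition above it satisfies the Algebra Axiom. If $\cl$ is any Dietz closure on $R$ satisfying the Algebra Axiom, then $\cl \subseteq *$ by Theorem~\ref{intightclosure}. Hence $*$ is the largest such closure.''
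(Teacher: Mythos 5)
Your proof is correct and matches the paper's intent: the paper simply declares this corollary immediate from Theorem~\ref{intightclosure}, with the membership of $*$ in the class (Dietz closure by \cite{dietz}, Algebra Axiom by the tight closure proposition in Section~\ref{algebraaxiomsection}) already established earlier. Your write-up just makes explicit the two ingredients the paper leaves implicit, which is the same argument.
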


This result is a partial answer to a question asked in \cite{dietzclosureprops} regarding whether there is a largest Dietz closure on a given ring.

\begin{corollary}
\label{trivialcorollary}
Let $R$ be a complete local domain of \charp, and suppose that $R$ is weakly F-regular (i.e., tight closure is trivial on $R$). Then all Dietz closures on $R$ that satisfy the Algebra Axiom are trivial on $R$.
\end{corollary}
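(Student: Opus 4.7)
The plan is immediate: invoke Theorem~\ref{intightclosure} and then use the hypothesis that tight closure is trivial. More precisely, let cl be any Dietz closure on $R$ satisfying the Algebra Axiom. By Theorem~\ref{intightclosure}, for every pair $N \subseteq M$ of finitely generated $R$-modules we have
\[
N_M^{\cl} \subseteq N_M^{*}.
\]
Since $R$ is weakly F-regular in the sense stated (tight closure trivial on $R$), $N_M^{*} = N$ for all such pairs. Combining this with the Extension axiom $N \subseteq N_M^{\cl}$ forces $N_M^{\cl} = N$, i.e., cl is trivial.

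There is essentially no obstacle beyond unwinding definitions; the work has already been done in Theorem~\ref{intightclosure}. The one subtlety worth flagging is that the parenthetical in the statement is doing real work: the classical notion of weakly F-regular refers only to ideals of $R$, but here it is being read as the stronger condition that tight closure is trivial on all finitely generated $R$-modules (which is the content of F-regularity, and which is known to follow from weak F-regularity in the Gorenstein or graded cases, among others). With that reading in hand, the corollary is a one-line consequence of containment in tight closure plus the Extension axiom, so no further argument is required.
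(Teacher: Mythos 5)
Your proof is correct and matches the paper's approach exactly: the paper presents this corollary as an immediate consequence of Theorem~\ref{intightclosure}, precisely the containment-in-tight-closure argument you give. Your remark about reading the hypothesis as triviality of tight closure on all finitely generated modules (as the paper's parenthetical indicates) is a fair and worthwhile clarification.
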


Theorem \ref{algaxiomindependent} below along with Example \ref{mixedcharpullbacknotdietz} implies that the Algebra Axiom is independent of the Dietz axioms.

\begin{thm}
\label{algaxiomindependent}
The Dietz Axioms do not imply the Algebra Axiom, i.e., there exist Dietz closures that do not satisfy the Algebra Axiom.
\end{thm}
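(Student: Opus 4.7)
The plan is to invoke Corollary \ref{trivialcorollary} in its contrapositive form: if $R$ is a complete local domain of characteristic $p>0$ which is weakly F-regular, then every Dietz closure on $R$ satisfying the Algebra Axiom must be trivial. Consequently, any non-trivial Dietz closure on such an $R$ automatically violates the Algebra Axiom, and the task reduces to producing such a closure on some weakly F-regular ring.

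The natural source of Dietz closures is module closures: by \cite{dietz}, for any big Cohen-Macaulay $R$-module $B$ the operation $\cl_B$ is a Dietz closure. So I would fix a complete local domain $R$ of characteristic $p>0$ which is weakly F-regular (for instance a regular local ring, or an F-regular non-regular singularity such as a Veronese subring in large characteristic), and exhibit a big CM module $B$ over $R$ such that $\cl_B$ is non-trivial. To force non-triviality, $B$ must fail to be (a direct summand of) a big CM algebra, since by Theorem \ref{intightclosure} together with the triviality of tight closure on weakly F-regular rings, every big CM algebra closure on $R$ is trivial. Concretely, I would build $B$ as an infinitely generated, non-flat big CM module via Hochster's module modification procedure, adding enough ``superfluous'' modifications that a specific closure relation $1 \otimes u \in \im(B \otimes N \to B \otimes M)$ survives for some $N \subseteq M$ and $u \in M \setminus N$.

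Given such a pair $(R,B)$, the conclusion is immediate: $\cl_B$ is a non-trivial Dietz closure on the weakly F-regular ring $R$, so by Corollary \ref{trivialcorollary} it cannot satisfy the Algebra Axiom.

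The main obstacle is producing the non-trivial big CM module closure over a weakly F-regular ring. Every standard big CM algebra closure on $R$ collapses to the trivial closure, so the example must genuinely exploit the module-versus-algebra distinction singled out by the Algebra Axiom; pinpointing a concrete inclusion $N \subseteq M$ together with an element $u \in M$ for which the module modifications force $1 \otimes u \in \im(B \otimes N \to B \otimes M)$ without $u \in N$ requires a careful hands-on construction, likely by choosing $R$ non-regular (so that $\omega_R \not\cong R$ or similar module-theoretic obstructions appear) and tracking a particular modification step that introduces a new closure relation not already visible in $R$.
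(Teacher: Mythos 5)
Your overall strategy is exactly the paper's: take the contrapositive of Corollary \ref{trivialcorollary} and exhibit a nontrivial Dietz closure on a weakly F-regular complete local domain of characteristic $p$. But the crucial step --- actually producing that nontrivial closure --- is left as an acknowledged open problem in your write-up, and the route you sketch for it (an infinitely generated big CM module built by module modifications, engineered so that some relation $1\otimes u\in \im(B\otimes N\to B\otimes M)$ ``survives'') is not carried out and would be genuinely difficult to control. The paper closes this gap by citation rather than construction: by \cite[Corollary~5.12]{dietzclosureprops}, any complete local domain $(R,m,k)$ of dimension $d$ that is \emph{not regular} admits the nontrivial Dietz closure $\cl_{\syz^d(k)}$, the module closure coming from the finitely generated $d$-th syzygy of the residue field (a maximal Cohen--Macaulay module). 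Choosing $R$ weakly F-regular but not regular (e.g.\ a suitable Veronese subring) then finishes the argument in one line. So the missing idea is precisely this concrete, finitely generated source of nontriviality; without it your proof is a program, not a proof.

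One further caution: your parenthetical suggestion to take $R$ a regular local ring cannot work. On a regular local ring every Dietz closure is trivial (this is the main classification result of \cite{dietzclosureprops}), so there is nothing nontrivial to exhibit there; non-regularity of $R$ is not merely ``likely'' needed, as you hedge at the end, but essential. You correctly sense that the example must exploit the module-versus-algebra distinction --- indeed $\syz^d(k)$ is a big CM \emph{module} whose closure is nontrivial even though every big CM \emph{algebra} closure on a weakly F-regular ring collapses --- but the paper gets this from the finitely generated syzygy module, not from an ad hoc infinitely generated construction.
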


\begin{proof}
Let $(R,m,k)$ be a complete local domain of \charp\ that is weakly F-regular but not regular and has dimension $d$. By \cite[Corollary~5.12]{dietzclosureprops}, $R$ has a nontrivial Dietz closure, $\cl=\cl_{\syz^d(k)}$. If cl satisfied the Algebra Axiom, then by Corollary \ref{trivialcorollary}, it would be trivial on $R$. Hence cl is a Dietz closure on $R$ that does not satisfy the Algebra Axiom.
\end{proof}

\begin{defn}
\label{testidealdef}
Let cl be a closure operation on a ring $R$. Define the \textit{cl-test ideal} of $R$ by \[\tau_{\cl}(R)=\bigcap_{N \subseteq M \text{ f.g. }} N:N^\cl_M.\]
\end{defn}

This is a definition that extends the notion of a test ideal for tight closure, inspired by \cite{liftableintegralclosure}. In the case below, they could prove to be interesting objects to study.

\begin{lemma}
Let $R$ be a complete local domain of \charp, and cl a Dietz closure on $R$ that satisfies the Algebra Axiom. Then the cl-test ideal of $R$ is nonzero.
\end{lemma}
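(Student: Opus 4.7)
The plan is to use Theorem~\ref{intightclosure} to reduce the statement to the well-known fact that the tight closure test ideal of a complete local domain of characteristic $p>0$ is nonzero.

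First, I would observe that the assignment $\cl \mapsto \tau_\cl(R)$ is order-reversing in the following sense: if $\cl$ and $\cl'$ are closure operations on $R$ with $N_M^\cl \subseteq N_M^{\cl'}$ for all pairs $N \subseteq M$ of finitely generated $R$-modules, then for every such pair we have the containment $N :_R N_M^{\cl'} \subseteq N :_R N_M^{\cl}$, and intersecting over all $N \subseteq M$ gives $\tau_{\cl'}(R) \subseteq \tau_{\cl}(R)$.

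Next, because $R$ is a complete local domain of characteristic $p>0$ and $\cl$ is a Dietz closure satisfying the Algebra Axiom, Theorem~\ref{intightclosure} yields $\cl \subseteq \ast$. Applying the order-reversing observation with $\cl' = \ast$, I get $\tau_\ast(R) \subseteq \tau_\cl(R)$, where $\tau_\ast(R)$ is the usual tight closure test ideal (the tight closure version of Definition~\ref{testidealdef} agrees with the classical definition, since $N :_R N_M^\ast$ is precisely the set of elements that multiply the tight closure of $N$ in $M$ into $N$).

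Finally, I would invoke the theorem of Hochster and Huneke that the tight closure test ideal of a reduced excellent local ring of characteristic $p>0$ is nonzero; a complete local domain certainly satisfies these hypotheses. This gives $\tau_\ast(R) \ne 0$, and hence $\tau_\cl(R) \supseteq \tau_\ast(R) \ne 0$. No step here is really an obstacle: the only nontrivial input is Theorem~\ref{intightclosure} (already proved) together with the existence of test elements for tight closure, which is a standard result one cites rather than reproves.
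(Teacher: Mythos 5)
Your proposal is correct and matches the paper's argument: the paper likewise invokes Theorem~\ref{intightclosure} to get $\cl \subseteq \ast$ and then cites the Hochster--Huneke existence of a nonzero tight closure test element, noting that such an element is automatically a test element for $\cl$ (which is exactly your order-reversing observation $\tau_\ast(R) \subseteq \tau_{\cl}(R)$). No differences worth noting.
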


\begin{proof}
By \cite{fndtc}, $R$ has at least one nonzero test element for tight closure. Since $\cl \subseteq *$, this will also be a test element for cl. Hence the cl-test ideal of $R$ is nontrivial.
\end{proof}

This should lead to further connections between Dietz closures on a ring $R$ and the singularities of $R$.

\section*{Acknowledgments}

My thanks to Mel Hochster for many helpful conversations relating to this work. I also thank Geoffrey D. Dietz, Linquan Ma, and Felipe Perez for their comments on a draft of the manuscript, and Karen Smith for comments on my thesis that were also applied to this paper. This work was partially supported by NSF grants DGE 1256260 and DMS 1401384.

\section*{Bibliography}

\bibliography{rrgbibfile}{}
\bibliographystyle{plain}
\end{document}